\documentclass[12pt]{amsart}
\usepackage{amssymb,latexsym,amsmath,amscd,mathrsfs,yfonts,hyperref}
\usepackage{fullpage,enumerate}
\usepackage{eucal,graphicx}
\usepackage{mathtools}
\usepackage{tikz-cd}
\setcounter{section}{-1}

\input xy
\xyoption{all}

%%%%%%%%%%%%%%%%%%%%%%%%%%  New Commands  %%%%%%%%%%%%%%%%%%%%%%

%----sets--------------------------------------------------------------

\newcommand{\CC}{\mathbb C}

\newcommand{\NN}{\mathbb N}

%----greek-------------------------------------------------------------

\def\cO{\mathcal O}
\def\cM{\mathcal M}

\def\cS{\mathcal S}

% More commands

\newcommand{\adj}[4]{#1\negmedspace: #2\rightleftarrows #3:\negmedspace #4}

\def\Aut{\mathrm{Aut}}

\def\Id{\mathrm{Id}}

\def\ab{\mathrm{ab}}
\def\Perf{\mathtt{Perf}}

\def\sset{\mathtt{sSet}}
\def\dset{\mathtt{dSet}}

\def\draw{\mathtt{dr}}
\def\llp{\mathrm{llp}}
\def\rlp{\mathrm{rlp}}
\def\An{\mathtt{An}}
\def\den{\mathtt{dd}}

\def\C{\mathrm{C}}

\def\ev{\mathrm{ev}}
\def\colim{\mathrm{colim}}

\def\id{\mathrm{id}}

\def\P{\mathcal P}
\def\Pinf{\mathcal{P}_\infty}

\def\f{\mathrm{f}}

\def\op{\mathrm{op}}

\def\Fun{\mathrm{Fun}}

\def\K{\mathrm{K}}

\def\iNS{\mathtt{N}\mathcal{S_*}}
\def\uNS{\mathtt{N}\mathcal{S}}
\def\NSf{\mathtt{N}\mathcal{S}{{}^\mathtt{fin}}}

\def\iNSm{\mathtt{N}\mathcal{S^{\mathtt{SM}}}}

\def\cSf{\mathcal{S}^{\mathrm{fin}}_*}
\def\pS{\mathcal{S_*}}
\def\Sp{\mathtt{Sp}}

%----indices-----------------------------------------------------------
\def\cJ{\mathcal J}

\def\Cat{\mathtt{Cat}}

\def\oper{\mathtt{Operad}}
\def\Ho{\mathrm{Ho}}

\def\KK{\mathrm{KK}}

\def\Set{\mathtt{Set}}

\def\cC{\mathcal C}

\def\1{\bf{1}}

\def\cH{\mathcal H}

\def\path{\mathrm{Path}}

\def\Csep{\mathtt{SC^*}}

\def\iCsep{\text{$\mathtt{SC_\infty^*}$}}

\def\cL{\mathcal L}

\def\cD{\mathcal D}

\def\SCu{\mathtt{SC^*_{un}}}
\def\mix{\mathrm{mix}}

\def\dN{\mathrm{N}_\mathrm{d}}
\def\kk{\mathrm{kk}}

\def\CW{\mathtt{CW^{fin}}}
\def\cA{\mathcal{A}}

\newcommand{\map}{\rightarrow}
\newcommand{\functor}{\rightarrow}

\def\mono{\hookrightarrow}

\def\Ind{\mathrm{Ind}}

\def\edge{\mathrm{edge}}
\def\Com{\mathrm{US}}
\def\N{\mathrm{N}}

\def\one{\mathbf{1}}

\def\rB{\mathrm{B}}

% ----------------------------Miscellaneous----------------------------------

\newcommand{\beq}{\begin{eqnarray}}
\newcommand{\beqn}{\begin{eqnarray*}}
\newcommand{\eeq}{\end{eqnarray}}
\newcommand{\eeqn}{\end{eqnarray*}}

%----theorem-----------------------------------------------------------
\theoremstyle{definition}
\newtheorem{thm}{Theorem}[section]
\theoremstyle{definition}

\newtheorem*{Rem}{Remark}

\newtheorem{lem}[thm]{Lemma}
\newtheorem{prop}[thm]{Proposition}

\newtheorem{ex}[thm]{Example}
\newtheorem{defn}[thm]{Definition}
\newtheorem{rem}[thm]{Remark}

%%%%%%%%%%%%%%%%%%%%%%%%%%%%%%%%%%%%%%%%%%%%%%%%%%%%%%
%%%%%%%%%%%%%%%%%%%%%%%%%%%%%%%%%%%%%%%%%%%%%%%%%%%%%%

\begin{document}

\title{$C^*$-algebraic drawings of dendroidal sets}
\author{Snigdhayan Mahanta}
\email{snigdhayan.mahanta@mathematik.uni-regensburg.de}
\address{Fakult{\"a}t f{\"u}r Mathematik, Universit{\"a}t Regensburg, 93040 
Regensburg, Germany.}
\subjclass[2010]{46L85, 55P48, 55U10, 18D50, 46L87}
\keywords{$C^*$-algebras, graph algebras, noncommutative spaces, dendroidal 
sets, simplicial sets, $\infty$-operads, $\infty$-categories}
\thanks{This research project was supported by the Deutsche 
Forschungsgemeinschaft (SFB 1085), ERC through AdG 267079, and the Alexander von 
Humboldt Foundation (Humboldt Professorship of Michael Weiss).}

\begin{abstract}
In recent years the theory of dendroidal sets has emerged as an important 
framework for higher algebra. In this article we introduce the concept 
of a {\em $C^*$-algebraic drawing} of a dendroidal set. It depicts a dendroidal 
set as an object in the category of presheaves on $C^*$-algebras. We show that 
the construction is functorial and, in fact, it is the left adjoint of a Quillen 
adjunction between combinatorial model categories. We use this construction to produce a 
bridge between the two prominent paradigms of noncommutative geometry via 
adjunctions of presentable $\infty$-categories, which is the primary motivation 
behind this article. As a consequence we obtain a single mechanism to construct 
bivariant homology theories in both paradigms. We propose a 
(conjectural) roadmap to harmonize algebraic and analytic (or topological) 
bivariant $\K$-theory. Finally, a method to analyse graph algebras in terms of 
trees is sketched. 
\end{abstract}

\maketitle

\setcounter{tocdepth}{2}
\tableofcontents

\section{Introduction}
Dendroidal sets provide a convenient model for 
$\infty$-operads (see \cite{HeuHinMoe} for a comparison with Lurie's model 
\cite{LurHigAlg} for $\infty$-operads without constants). The category of 
dendroidal sets $\dset$ was introduced by Moerdijk--Weiss \cite{MoeWei1,MoeWei2} 
so that (inter alia) it can serve as a receptacle for the nerve functor on the 
category of operads $\oper$. The following commutative diagram is explanatory:

\beqn
\xymatrix{
\Cat \ar[r]\ar[d]_\N & \oper\ar[d]^{\dN}\\
\sset \ar[r] & \dset,
}
\eeqn where the vertical arrow $\N$ (resp. $\dN$) denotes the nerve (resp. 
dendroidal nerve) functor. Cisinski--Moerdijk constructed a cofibrantly 
generated model structure on $\dset$ \cite{CisMoe}, such that the fibrant 
objects are precisely the $\infty$-operads \cite{LurHigAlg}. Over the last 
decade the theory of dendroidal sets has reached an advanced stage, subsuming 
several aspects of the theory of operads and that of simplicial sets 
\cite{CisMoe1,CisMoe2}.

For a small category $\cC$ let $\P(\cC)$ denote the category of $\Set$-valued 
presheaves on $\cC$. Let $\SCu$ denote the category of {\em nonzero} separable 
unital $C^*$-algebras equipped with unit preserving $*$-homomorphisms. The 
Gel'fand--Na{\u{\i}}mark duality implies that $\SCu^\op$ can be regarded as the 
category of nonempty compact second countable noncommutative spaces with 
continuous maps. Let $\Omega$ denote the small category of trees, so that $\dset:=\P(\Omega)$ 
is the category of {\em dendroidal sets}. In this article 
we prove the following results:

\begin{enumerate}
 \item We construct a {\em noncommutative dendrices} functor $D:\Omega\functor\SCu^\op$.
 \item We construct an {\em operadic model structure} on $\P(\SCu^\op)$, an instance of Cisinksi's model structure on presheaves.
 \item We observe that the canonical adjoint pair induced by the noncommutative dendrices 
 functor via left Kan extension $$\adj{\draw}{\dset}{\P(\SCu^\op)}{\den},$$ is a Quillen 
 pair between combinatorial model categories.
\end{enumerate} We call the image of a dendroidal set applying the left adjoint functor 
$\draw:\dset\functor\P(\SCu^\op)$ the {\em $C^*$-algebraic drawing} of the dendroidal set.

These results constitute the first steps towards a bigger objective that we briefly explain below. 
There are two prevalent perspectives on noncommutative geometry - analytic and algebraic.
The analytic approach was pioneered by Connes \cite{ConBook,ConMarBook} whereas the algebraic 
approach builds upon the works of Drinfeld, Keller, 
Kontsevich, Lurie, Manin, Tabuada, To{\"e}n, and several others 
\cite{MMM,KelDG,KonNotes,LurHigAlg,TabGarden,DGNCG}. The following 
table compares the two approaches as of now:

\begin{table}[h]
\centering
\caption{Analytic vs. Algebraic}
\label{my-label}
\begin{tabular}{|l|l|l|ll}
\cline{1-3}
          & {\bf Analytic} & {\bf Algebraic} &  &  \\ \cline{1-3}
{\bf Objects} & $C^*$-algebras & $\infty$-categories &  &   \\ \cline{1-3}
{\bf Morphisms} & $*$-homomorphisms & $\infty$-functors &  &    \\ \cline{1-3}
{\bf How to subsume traditional spaces} & $X\mapsto C(X)$ & $X\mapsto\Perf_\infty(X)$ &  &    \\ \cline{1-3}
\end{tabular}
\end{table} The space $X$ above in each case must satisfy some reasonable hypotheses. The $\infty$-category 
$\Perf_\infty(X)$ is stable and in some contexts stability is included in the definition. 
This article is primarily motivated by the author's desire to reconcile the two 
viewpoints. In view of the disparate nature of the basic ingredients of 
the two paradigms a {\em bridge} between the basic objects of the two worlds in 
the form (a zigzag of) $\infty$-categorical adjunctions subject to a reasonable 
requirement (explained below) seems to be a sensible target. While constructing the bridge 
we have resorted to $\infty$-categories that reflects the state of the art.

Let $\uNS$ denote the compactly generated $\infty$-category of 
(unpointed) noncommutative spaces, whose construction is presented in subsection 
\ref{otherAdj}. The following diagram of adjunctions between presentable 
$\infty$-categories summarizes our list of results and puts them in the 
broader context (see also Remark \ref{opendSet}):

\beq  \label{corr}
\xymatrix{
&& \N(\P(\SCu^\op)^\circ)\ar@/^1pc/[lld]^{\mathbf{R}\den} 
\ar@/^1pc/@{-->}[rrd]\\
\N(\dset^\circ)\ar@/^1pc/[rru]^{\mathbf{L}\draw} &&&& \uNS . 
\ar@/^1pc/@{-->}[llu]
}
\eeq Here $\N(\cM^\circ)$ denotes the underlying $\infty$-category of a  model category $\cM$. 
The $\infty$-categorical adjuction 
$\adj{\mathbf{L}\draw}{\N(\dset^\circ)}{\N(\P(\SCu^\op)^\circ)}{\mathbf{R}\den}$ 
is induced by the Quillen adjunction 
$\adj{\draw}{\dset}{\P(\SCu^\op)}{\den}$ between combinatorial model categories mentioned 
earlier (see also Remark \ref{infAdj}). However, the dashed pair between $\uNS$ and 
$\N(\P(\SCu^\op)^\circ)$ is merely a zigzag of adjunctions that is constructed 
at the level of $\infty$-categories. This construction actually passes through a 
{\em mixed model structure}, denoted by $\P(\SCu^\op)_\mix$, on $\P(\SCu^\op)$ that is a left
Bousfield localization of the operadic model structure 
(see Definition \ref{mixModel}). Diagram \ref{corr} is our proposed {\em bridge} between the 
two paradigms of noncommutative geometry.

\subsection{Bivariant homology theories} \label{BHVision} Given any stable presentable $\infty$-category $\cC$, a 
colimit preserving functor $$\rB_\cC:\N(\P(\SCu^\op)^\circ)\functor\cC$$ can be 
viewed as a $\cC$-valued bivariant homology theory on $\N(\P(\SCu^\op)^\circ)$. 
For a presentable $\infty$-category $\cD$, let $\Sp(\cD)$ denotes its 
stabilization. The functor $\rB_\cC$ factors as  
$\N(\P(\SCu^\op)^\circ)\functor\Sp(\N(\P(\SCu^\op)^\circ))\functor\cC$.

There must be a unified framework for bivariant homology theories 
in the two paradigms of noncommutative geometry. In order to realise this objective 
one must construct a functor $\rB_\cC$ that passes the following two acid tests:

\begin{enumerate}[(i)]
 \item the composite functor $\N(\dset^\circ)\functor 
\N(\P(\SCu^\op)^\circ)\overset{\rB_\cC}{\functor}\cC$ should lead to the 
(nonconnective version of) algebraic $\K$-theory of $\infty$-operads as in 
\cite{NikKTh} and 
 \item the composite functor $\uNS \functor 
\N(\P(\SCu^\op)^\circ)\overset{\rB_\cC}{\functor}\cC$ should recover the 
opposite of the bivariant $\K$-theory of (pointed) noncommutative spaces as in 
\cite{MyColoc} after stabilization. 
\end{enumerate} Let us provide a pictorial description of our vision:

 \beq \label{vision}
\xymatrix{
\N(\dset^\circ) \ar[rd]_{\mathrm{F_1}}\ar@/^1pc/[rrrrd]^{\text{algebraic 
$\K$-theory}}\\
& \N(\P(\SCu^\op)^\circ) \ar@{-->}[rrr]^{\rB_\cC} &&& \cC .\\
\uNS 
\ar[ru]^{\mathrm{F_2}}\ar@/_1pc/[rrrru]_{\text{$\mathtt{KK_\infty}^\op$-theory}}
}
\eeq Here the functors $\mathrm{F_1}$ and $\mathrm{F_2}$ are furnished by those 
of diagram \eqref{corr}, so that $\mathrm{F_1}=\mathbf{L}\draw$. For any 
$X\in\N(\dset^\circ)$ we require $\cC(\rB_\cC\circ\mathrm{F_1}(\one), 
\rB_\cC\circ\mathrm{F_1}(X))$ to be the (nonconnective version of) algebraic 
$\K$-theory of $X$, where $\one$ is a unit object. Moreover, for any pair 
$A,B\in\uNS$ we require the following equivalence of spectra 
$$\cC(\Sp(\rB_\cC)\circ\Sp(\mathrm{F_2})(\Sigma^\infty_+(A)),
\Sp(\rB_\cC)\circ\Sp(\mathrm{F_2})(\Sigma^\infty_+(B)))\simeq 
\mathtt{KK_\infty}^\op(\mathrm{k_+}^\op(A),\mathrm{k_+}^\op(B)),$$ where 
$\mathrm{k_+}^\op$ is the composite functor 
$\uNS\functor\iNS\overset{\mathrm{k}^\op}{\functor}\mathtt{KK_\infty}^\op$ 
\cite{MyColoc}. Varying $\rB_\cC$ one can construct new bivariant homology 
theories using the above mechanism in both paradigms. For more generalities on 
bivariant homology theories of noncommutative spaces in the setting of 
$\infty$-categories and model categories the readers may refer to 
\cite{MyNSHLoc, BarJoaMe}. One possible application of this vision is outlined in 
Remark \ref{KirchbergAlg}.

\begin{Rem}
 A knowledgeable reader might contend that {\em spectral triples} constitute the 
notion of a space in noncommutative geometry {\`a} la Connes. Let us clarify 
that by a {\em space} we really mean a {\em topological space}. A spectral 
triple $(A,H,D)$ should be regarded as a noncommutative manifold, whose 
underlying topological space is determined by the $C^*$-algebra $A$. Therefore, 
our proposed bridge \eqref{corr} exists in the realm of noncommutative topology. 
\end{Rem}

\begin{Rem}
 There is also a Quillen adjunction $\adj{i_!}{\sset}{\dset}{i^*}$ that connects 
the theory of $\infty$-categories with that of $\infty$-operads. It should be 
noted that in this case the relevant model structure on $\sset$ is the Joyal 
model structure, whose fibrant objects are $\infty$-categories. Via the Yoneda 
embedding $\SCu^\op\mono\P(\SCu^\op)$ the category $\SCu^\op$ acquires a new 
class of weak equivalences from the operadic model structure on $\P(\SCu^\op)$ 
as in Definition \ref{DenModel}. We call these weak equivalences on $\SCu^\op$ 
the {\em weak operadic equivalences}. The associated homotopy theory is 
different from (the opposite of) the standard homotopy theory of 
$C^*$-algebras endowed by the $C^*$-homotopy equivalences. The exact 
difference between the two homotopy theories is not clear to the author (see Remark \ref{NewClassification}).
\end{Rem}

\begin{Rem}
The technology developed in this article works for 
all dendroidal sets. But from the viewpoint of topology it is preferable to 
restrict one's attention to {\em open dendroidal sets}, which model 
$\infty$-operads without constants (see Remark \ref{opendSet}).
\end{Rem}

\smallskip
\noindent
{\bf Notations and conventions:} Unless otherwise stated, a graph means a finite 
directed graph and a presheaf is considered to be $\Set$-valued. For the sake of 
definiteness we adopt the quasicategorical model for $\infty$-categories. An 
operad always means a coloured operad. We are mostly going to deal with the 
category of {\em nonzero} unital separable $C^*$-algebras $\SCu$ with unit 
preserving $*$-homomorphisms (except for subsection \ref{otherAdj}). Including 
the zero $C^*$-algebra from the viewpoint of trees and operads does not seem 
appropriate. 

\smallskip

\subsubsection*{Acknowledgements:} The author would like to thank U. Bunke, G. Raptis, and 
F. Trova for helpful conversations. The author is also extremely grateful to S. 
Henry and I. Moerdijk for their constructive feedback. This project was 
initiated and partially carried out by the author while visiting Max Planck 
Institute f{\"u}r Mathematik and Hausdorff Research Institute for Mathematics, 
Bonn. It is also influenced by our imagination in \cite{NCC} that was written 
under the auspices of a fellowship from Institut des Hautes {\'E}tudes 
Scientifiques, Paris in 2009. The author would also like to express 
sincere gratitude towards N. Ramachandran for rekindling the interest in this 
project. Finally, the author is indebted to the anonymous referees for their 
meticulous reports that improved and streamlined the final exposition significantly.

 \section{Dendroidal Sets} \label{dset} 
 We are going to assume familiarity with the theory of (coloured) operads and 
simplicial sets. For the uninitiated we recommend the following good sources of knowledge 
\cite{MayItLoop,BoaVog,MarShnSta,LodVal,GoeJar,BerMoe} - a list that is obviously non-exhaustive.
Since the article is written for topologists as well as operator algebraists, we review 
the theory of dendroidal sets from \cite{IWeissThesis,MoeWei1,MoeWei2,CisMoe} that is a 
simultaneous generalization of both - operads and simplicial sets. The 
exposition is quite brief and necessarily not entirely self-contained.
 
Trees have played an important role in the theory of operads ever since its 
inception. We provide an informal and very concise introduction to trees. We 
follow the nomenclature and presentation in \cite{MoeWei1,MoeLect}. A tree is a 
finite directed graph, whose underlying undirected graph is connected and 
acyclic. The vertices will be marked by $\bullet$ as shown below: 
\beq
\xymatrix{*{\,}\ar@{-}[dr]_{l_1} &  & *{\,}\ar@{-}[dl]^{l_2}\\
 & *{\bullet}\ar@{-}[dr]_{e_{1}}\ar@{}|{\,\,\,\,\,\,\,\,\,\, u} &  & 
*{\,}\ar@{-}[dr]_{l_3} &  & 
*{\bullet}\ar@{-}[dl]^{e_4}\ar@{}|{\,\,\,\,\,\,\,\,\,\, y}\\
 &  & *{\bullet}\ar@{-}[dr]_{e_{2}}\ar@{}|{\,\,\,\,\,\,\,\,\,\, v} &  & 
*{\bullet}\ar@{-}[dl]^{e_{3}}\ar@{}|{\,\,\,\,\,\,\,\,\,\, w}\\
 &  &  & *{\bullet}\ar@{-}[d]^r\ar@{}|{\,\,\,\,\,\,\,\,\,\, x}\\
 &  &  & *{\,}}
\eeq An edge that is connected to two vertices is called an {\em inner edge}; 
the rest are called {\em outer edges}. Amongst the outer edges, i.e., those that 
are attached to only one vertex, there is a distinguished one called the {\em 
root}; the other outer edges are called {\em leaves}. A {\em non-planar rooted 
tree} is a non-empty tree with both inner and outer edges with the choice of one 
distinguished outer edge as the root. Henceforth, unless otherwise stated, by a 
tree we shall mean a non-planar rooted tree. Such a tree will be drawn with the 
root at the bottom and all arrows directed from top to bottom (with arrowheads 
deleted) as shown above. For instance, in the above tree there are three leaves 
$l_1,l_2,l_3$, four inner edges $e_1,e_2,e_3, e_4$, and the root is $r$. Note 
that the number of inner edges as well as leaves in a tree could be zero. The 
simplest possible tree is \beqn\xymatrix{{}\ar@{-}[d]\\ {}\ar@{}|{\,\,\,\,\,\, 
,}}\eeqn which is called the {\em unit tree}.

The category of simplicial sets, denoted by $\sset$, is the category of 
$\Set$-valued presheaves on the category of simplices $\Delta$, i.e., 
$\Fun(\Delta^\op,\Set)$. The notion of a morphism between 
trees is described in subsection \ref{facDeg}, and this allows us to define a category $\Omega$ of trees. 
Then, in analogy with simplicial sets, we define dendroidal sets to be $\dset=\Fun(\Omega^\op,\Set)$, 
the category of $\Set$-valued presheaves on $\Omega$. It will be clear from the definition of the objects 
and the morphisms of $\Omega$ that it can be viewed as a full
subcategory of the category of symmetric coloured operads. There is a fully faithful functor 
$i:\Delta\mono\Omega$ leading to an adjunction $\adj{i_!}{\sset}{\dset}{i^*}$. The 
functor $i_!$ is fully faithful and hence the category of dendroidal sets is a generalization of that of 
simplicial sets. Since $\dset=\Fun(\Omega^\op,\Set)$ it suffices to describe the 
category $\Omega$. The objects of $\Omega$ are non-planar rooted trees as 
described above. Note that in a {\em planar} rooted tree the incoming edges at 
each vertex have a prescribed linear ordering, which does not exist in a 
non-planar rooted tree. Hence each such planar (resp. non-planar) rooted tree 
generates a non-symmetric (resp. symmetric) coloured operad $\Omega[T]$. The set 
of morphisms $\Omega(S,T)$ between two non-planar rooted trees $S,T$ is by 
definition the set of coloured operad maps between $\Omega[S]$ to $\Omega[T]$. 
Thus by construction $\Omega$ is the full subcategory of the category of 
symmetric coloured operads spanned by the objects of the form $\Omega[T]$. The 
colours of the operad $\Omega[T]$ correspond to the edges of $T$ and a morphism 
between such operads is completely determined by its effect on colours. Each 
vertex $v$ of a tree $T$ with outgoing edge $e$ and a labelling of the incoming 
edges $e_1,\cdots, e_n$ defines an operation $v\in\Omega[T](e_1,\cdots, e_n; 
e)$. Consider the non-planar rooted tree $T$

\beq
\xymatrix{
% *{\,}\ar@{-}[dr]_{l_1} &  & *{\,}\ar@{-}[dl]^{l_2}\\
  & &  & *{\,}\ar@{-}[dr]_{l_1} &  & *{\,}\ar@{-}[dl]^{l_2}\\
 &  & *{\bullet}\ar@{-}[dr]_{e_{1}}\ar@{}|{\,\,\,\,\,\,\,\,\,\, v} &  & 
*{\bullet}\ar@{-}[dl]^{e_{2}}\ar@{}|{\,\,\,\,\,\,\,\,\,\, w}\\
 &  &  & *{\bullet}\ar@{-}[d]^r\ar@{}|{\,\,\,\,\,\,\,\,\,\, x}\\
 &  &  & *{\,} \ar@{}|{\,\,\,\,\,\,\,\,\,\, .}}
\eeq The operad $\Omega[T]$ that it generates has five colours 
$l_1,l_2,e_1,e_2,$ and $r$. The generating operations are $v\in\Omega[T](;e_1)$, 
$w\in\Omega[T](l_1,l_2;e_2)$, and $x\in\Omega[T](e_1,e_2;r)$. There are also 
operations that arise from the action of the symmetric group in the non-planar 
case. For instance, if $\sigma\in\Sigma_2$, then $w\circ 
\sigma\in\Omega[T](l_2,l_1;e_2)$ is another operation. There are also the unit 
operations $1_{l_1},1_{l_2},1_{e_1},1_{e_2},$ and $1_r$ and compositions like 
$x\circ_2 w\in\Omega[T](e_1,l_1,l_2;r)$. We refrain from documenting a complete 
list of all operations and the relations they satisfy that the reader can 
herself/himself reproduce from the above diagram. Instead, we turn towards a 
more concrete (and pictorial) description of the morphisms in $\Omega$ that will 
be needed later.

\subsection{Face and degeneracy maps} \label{facDeg} We illustrate the face and 
degeneracy maps in $\Omega$ by examples that are taken directly from 
\cite{MoeWei1}, where one can find a more elaborate discussion. These maps 
provide an explicit description of all morphisms in the category $\Omega$ as we 
shall see at the end of this subsection.

\begin{enumerate}
\item If $e$ is an inner edge in $T$, then one obtains an {\em inner face} map 
$\partial_e:T/e\map T$, where $T/e$ is constructed by contracting the edge $e$ 
as shown below:

\[
\begin{array}{ccc}
\xymatrix{*{\,}\ar@{-}[rrd]_{a} & *{\,}\ar@{-}[rd]^{b} &  & 
*{\,}\ar@{-}[dl]_{c}\ar@{}[r]|{\,\,\,\,\,\,\,\,\, w} & 
*{\bullet}\ar@{-}[lld]^{d}\\
 & \,\ar@{}[r]_{\,\,\,\,\,\,\,\,\,\,\, u} & *{\bullet}\ar@{-}[d]^{f}\\
 &  & *{\,}}
 & \xymatrix{\\\ar[r]^{\partial_{e}} & *{}}
 & \xymatrix{*{\,}\ar@{-}[dr]_{a} &  & *{\,}\ar@{-}[dl]^{b}\\
\,\ar@{}[r]|{\,\,\,\,\,\,\,\,\,\,\,\,\,\, v} & *{\bullet}\ar@{-}[dr]_{e} &  & 
*{\,}\ar@{-}[dl]_{c}\ar@{}[r]|{\,\,\,\,\,\,\,\,\,\,\,\, w} & 
*{\bullet}\ar@{-}[dll]^{d}\\
 &  & *{\bullet}\ar@{-}[d]_{f} & \,\ar@{}[l]^{r\,\,\,\,\,\,\,\,\,\,\,}\\
 &  & *{\,}}
\end{array}\]

\item If a vertex $v$ in $T$ has exactly one
inner edge attached to it, one obtains the {\em outer face} map 
$\partial_v:T/v\map T$, where $T/v$ is constructed by deleting
$v$ and all the outer edges attached to it as shown below:

\[
\begin{array}{ccc}
\xymatrix{\\*{\,}\ar@{-}[dr]_{b} & 
*{\,}\ar@{-}[d]^{c}\ar@{}[r]|{\,\,\,\,\,\,\,\,\,\,\,\, w} & 
*{\bullet}\ar@{-}[dl]^{d}\\
\,\ar@{}[r]|{\,\,\,\,\,\,\,\,\,\,\,\, r} & *{\bullet}\ar@{-}[d]_{a}\\
 & *{\,}}
 & \xymatrix{\\\\\ar[r]^{\partial_{v}} & *{}}
 & \xymatrix{*{\,}\ar@{-}[dr]_{e} &  & *{\,}\ar@{-}[dl]^{f}\\
\,\ar@{}[r]|{\,\,\,\,\,\,\,\,\,\,\, v} & *{\bullet}\ar@{-}[dr]_{b} &  & 
*{\,}\ar@{-}[dl]_{c}\ar@{}[r]|{\,\,\,\,\,\,\,\,\,\,\,\, w} & 
*{\bullet}\ar@{-}[dll]^{d}\\
 &  & *{\bullet}\ar@{-}[d]_{a} & \,\ar@{}[l]|{r\,\,\,\,\,\,\,\,\,\,\,}\\
 &  & *{\,}}
\end{array}\]

It is also possible to remove the root and the vertex that it is attached to by 
this process as shown below:

\[
\begin{array}{ccc}
\xymatrix{*{\,}\ar@{-}[rrd]_{e} & *{\,}\ar@{-}[rd]^{f} &  & 
*{\,}\ar@{-}[dl]_{c}\ar@{}[r]|{\,\,\,\,\,\,\,\,\, w} & 
*{\bullet}\ar@{-}[lld]^{d}\\
 & \,\ar@{}[r]_{\,\,\,\,\,\,\,\,\,\,\, u} & *{\bullet}\ar@{-}[d]^{a}\\
 &  & *{\,}}
 & \xymatrix{\\\ar[r]^{\partial_{w}} & *{}}
 & \xymatrix{*{\,}\ar@{-}[rrd]_{e} & *{\,}\ar@{-}[rd]^{f} &  & 
*{\,}\ar@{-}[dl]_{c}\ar@{}[r]|{\,\,\,\,\,\,\,\,\, w} & 
*{\bullet}\ar@{-}[lld]^{d}\\
 & \,\ar@{}[r]_{\,\,\,\,\,\,\,\,\,\,\, u} & *{\bullet}\ar@{-}[d]^{a}\\
 & \,\ar@{}[r]_{\,\,\,\,\,\,\,\,\,\,\, w} & *{\bullet}\ar@{-}[d]^{r}\\
 &  & *{\,}}
\end{array}\]

\item If a vertex $v\in T$ has exactly one incoming edge, there is
a tree $T\backslash v$, obtained from $T$ by deleting the vertex $v$ and merging
the two edges $e_{1}$ and $e_{2}$ on either side of $v$ into one
new edge $e$. This defines the {\em degeneracy map} $\sigma_v:T\map T\backslash 
v$ as shown below:

\[
\begin{array}{ccc}
\xymatrix{*{\,}\ar@{-}[dr] &  & *{\,}\ar@{-}[dl]\\
 & *{\bullet}\ar@{-}[dr]_{e_{1}} &  & *{\,}\ar@{-}[dr] &  & *{\,}\ar@{-}[dl]\\
 &  & *{\bullet}\ar@{-}[dr]_{e_{2}}\ar@{}|{\,\,\,\,\,\,\,\,\,\, v} &  & 
*{\bullet}\ar@{-}[dl]\\
 &  &  & *{\bullet}\ar@{-}[d]\\
 &  &  & *{\,}}
 & \xymatrix{\\\\\ar[r]^{\sigma_{v}} & *{}}
 & \xymatrix{*{\,}\ar@{-}[dr] &  & *{\,}\ar@{-}[dl]\\
 & *{\bullet}\ar@{-}[ddrr] &  & *{\,}\ar@{-}[dr] &  & *{\,}\ar@{-}[dl]\\
 & \,\ar@{}[r]|{\,\,\,\, e} &  &  & *{\bullet}\ar@{-}[dl]\\
 &  &  & *{\bullet}\ar@{-}[d]\\
 &  &  & *{\,}}
\end{array}\]

\end{enumerate}

The following lemma explains the importance of these maps:

\begin{lem}[Lemma 3.1 of \cite{MoeWei1}] \label{treeMap}
 Any arrow $f:S\map T$ in $\Omega$ decomposes as \beqn\xymatrix{S\ar[r]^f 
\ar[d]_\sigma & T\\ S'\ar[r]^\varphi & T',\ar[u]^\delta     }\eeqn where 
$\sigma: S\map S'$ is a composition of degeneracy maps, $\varphi: S' \map T'$ is 
an isomorphism, and $\delta: T' \map T$ is a composition of face maps.
\end{lem}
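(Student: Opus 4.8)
The plan is to establish this as the tree-theoretic analogue of the epi--mono (Eilenberg--Zilber) factorization in $\Delta$, with compositions of degeneracies in the role of surjections and compositions of face maps in the role of injections. Recall that a morphism $f\colon S\map T$ in $\Omega$ is a map of the associated coloured operads $\Omega[S]\map\Omega[T]$ and is therefore determined by its effect on colours, i.e. by a map on edge sets together with the images of the generating operations: each vertex $v$ of $S$, viewed as an operation, is sent to an operation of $\Omega[T]$, which is precisely the datum of a subtree $T_v\subseteq T$ whose root and leaves are the $f$-images of the output and input edges of $v$ (up to the symmetric-group action). First I would record this dictionary and observe that $f$ respects operadic composition exactly when the subtrees $T_v$ glue compatibly along the edges they share.

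Next I would extract the degeneracy part. Call a unary vertex $v$ of $S$ \emph{collapsed} by $f$ if its two adjacent edges have equal image in $T$, equivalently if $f$ sends the operation $v$ to an identity. Let $\sigma\colon S\map S'$ be the composition of the degeneracy maps $\sigma_v$ over all such vertices; one checks, exactly as for the simplicial identities, that the order of collapsing is immaterial up to canonical isomorphism, so that $S'$ is well defined. By construction $f$ is constant on the fibres of $\sigma$, so it factors uniquely as $f=g\circ\sigma$ with $g\colon S'\map T$. The point of this step is that $g$ is now \emph{non-degenerate}: no vertex of $S'$ is sent to an identity, and since $T$ is acyclic one checks that $g$ is then injective on edges.

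It remains to factor a non-degenerate $g\colon S'\map T$ as an isomorphism followed by a composition of face maps, which I would prove by induction on the number of vertices (equivalently edges) of $T$ not accounted for by $g$. If $g$ is already an isomorphism we are done with $\delta=\id$. Otherwise $T$ carries structure beyond the image of $g$, and the task is to peel off a single face map $\partial\colon T^-\map T$ through which $g$ factors as $g=\partial\circ g'$ with $g'$ again non-degenerate, strictly decreasing the inductive quantity. Concretely: if some vertex $v$ of $S'$ is sent to a subtree $T_v$ containing an inner edge $e$, then $g$ factors through the inner face $\partial_e$; if an outer (leaf or root) vertex of $T$ lies outside every $T_v$, then $g$ factors through the corresponding outer face $\partial_v$. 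Iterating produces $g=\partial_{e_1}\circ\cdots\circ\partial_{e_k}\circ\varphi$ with $\varphi$ an isomorphism, so that $\delta=\partial_{e_1}\circ\cdots\circ\partial_{e_k}$ is the required composite of faces and $f=\delta\circ\varphi\circ\sigma$.

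The main obstacle is precisely this inductive face-extraction step: one must show that whenever $g$ fails to be an isomorphism there is always a \emph{valid} face available to remove. Inner faces are unconstrained, but outer faces can only delete a top vertex or the bottom vertex together with the root, so the argument must exploit that the image of $g$ is a connected subtree of the finite tree $T$ to guarantee the presence of a removable outer vertex or a contractible superfluous inner edge, and that $g$ still factors through the resulting smaller tree as a map of operads. Verifying that the residual map stays non-degenerate and that the process terminates (which it does, as $T$ is finite) is the heart of the proof; the well-definedness of the degeneracy stage and the bookkeeping with the symmetric-group action, which is absorbed into the isomorphism $\varphi$, are comparatively routine.
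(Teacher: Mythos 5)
The paper does not actually prove this lemma: it is imported verbatim as Lemma 3.1 of Moerdijk--Weiss \cite{MoeWei1}, so there is no internal proof to compare against. Measured against the source, your strategy is the standard and correct one --- the dendroidal analogue of the Eilenberg--Zilber factorization: split off the degeneracies by collapsing the unary vertices sent to identity operations, observe that the residual map $g$ is injective on edges, and then exhibit a non-degenerate map as an isomorphism followed by face maps by induction on the excess of $T$. The operadic dictionary you set up first (each vertex of $S$ is sent to a subtree $T_v\subseteq T$, and these glue along shared edges) is exactly the right organizing device, and absorbing the symmetric-group ambiguity into $\varphi$ is the correct bookkeeping.

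The one step that would fail as literally stated is the outer-face extraction: ``if an outer (leaf or root) vertex of $T$ lies outside every $T_v$, then $g$ factors through the corresponding outer face'' is not well formed, because an outer face $\partial_w$ exists only when $w$ has exactly one inner edge attached (a top vertex, or the root vertex); a generic vertex of $T$ outside the image admits no face map at all (picture $T$ a long path with the image a single middle edge). What is actually needed, and what you gesture at but do not prove, is that the union $T_0$ of the subtrees $T_v$ and the images of the edges is a connected subtree of $T$, and that the complement of a proper connected subtree of a finite tree always contains an \emph{extremal} vertex --- a top vertex or the root vertex --- which is therefore removable by an outer face through which $g$ still factors; combined with contracting, via inner faces, the edges of $T$ interior to some $T_v$, this is the entire content of the lemma. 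You correctly identify this as ``the heart of the proof'' and name the right ingredients (connectedness, finiteness, preservation of non-degeneracy), but you defer rather than execute the verification, so the proposal is a faithful outline of the Moerdijk--Weiss argument with its crucial induction left as an acknowledged gap. The low-vertex special cases (e.g.\ maps out of the unit tree into a corolla), which the paper itself sets aside in Remark \ref{specCase}, also require separate, if easy, treatment.
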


\begin{rem} \label{uniqueFac}
We have quoted the statement of Lemma \ref{treeMap} from the original source. If 
one carefully inspects its proof (cf. Lemma 2.3.2 of \cite{MoeLect}) one notices 
immediately that the factorization $f = \delta\circ\varphi\circ\sigma$ is unique.
Hence the degeneracy maps and the face maps of $\Omega$ actually constitute a factorization 
system.
\end{rem}

\subsection{Face and degeneracy identities} \label{iden} These face and 
degeneracy maps satisfy numerous identities. We illustrate them in terms of 
various commuting diagrams in $\Omega$ (with the existence of certain 
non-obvious arrows as assertions). The interested readers are referred to 
\cite{MoeWei1,MoeLect} for further details and also the discussion of a couple 
of special cases that we have left out (see Remark \ref{specCase}). 

\begin{enumerate}[(I)]
 \item If $e,f$ are distinct inner edges, then $(T/e)/f=(T/f)/e$ and the 
following diagram commutes:
 \beqn
 \xymatrix{
 (T/e)/f \ar[r]^{\partial_f}\ar[d]_{\partial_e} & T/e \ar[d]^{\partial_e} \\
 T/f \ar[r]^{\partial_f} & T.
 }\eeqn
 
 \item Assume $T$ has at least three vertices and let $\partial_v,\partial_w$ be 
distinct outer face maps. Then $(T/v)/w = (T/w)/v$ and the following diagram 
commutes:
 
 \beqn
 \xymatrix{
 (T/v)/w \ar[r]^{\partial_w}\ar[d]_{\partial_v} & T/v \ar[d]^{\partial_v} \\
 T/w \ar[r]^{\partial_w} & T.
 }\eeqn 
 
 \item If $e$ is an inner edge that is not adjacent to a vertex $v$, then 
$(T/e)/v = (T/v)/e$ and the following diagram commutes:
 
 \beqn
 \xymatrix{
 (T/v)/e \ar[r]^{\partial_e}\ar[d]_{\partial_v} & T/v \ar[d]^{\partial_v} \\
 T/e \ar[r]^{\partial_e} & T.
 }\eeqn
 
 \item Let $e$ be an inner edge that is adjacent to a vertex $v$ and let $w$ be 
the other adjacent vertex. In $T/e$ the two vertices combine to contribute a 
vertex $z$ (expressing the composition of $v$ and $w$ in some order). Then the 
outer face $\partial_z: (T/e)/z\map T/e$ exists if and only if the outer face 
$\partial_w:(T/v)/w\map T/v$ exists, and in this case $(T/e)/z = (T/v)/w$. 
Summarizing the setup the following diagram commutes:
 
 \beqn
 \xymatrix{
 (T/v)/w \ar@{=}[r]\ar[d]_{\partial_w} & (T/e)/z\ar[r]^{\partial_z} & T/e 
\ar[d]^{\partial_e} \\
 T/v \ar[rr]^{\partial_v} && T.
 }\eeqn
 
 \item If $\sigma_v,\sigma_w$ are two degeneracies of $T$, then $(T\backslash 
v)\backslash w = T\backslash w)\backslash v$ and the following diagram commutes:
 
 \beqn
 \xymatrix{
 T \ar[r]^{\sigma_v}\ar[d]_{\sigma_w} & T\backslash v \ar[d]^{\sigma_w} \\
 T\backslash w \ar[r]^{\sigma_v} & (T\backslash v)\backslash w.
 }\eeqn 
 
 \item Let $\sigma_v:T\map T\backslash v$ be a degeneracy and $\partial: T'\map 
T$ be any face map, such that $T'$ still contains $v$ and its two adjacent edges 
as a subtree. Then the following diagram commutes:
 
 \beqn
 \xymatrix{
 T \ar[r]^{\sigma_v} & T\backslash v \\
 T'\ar[u]^\partial\ar[r]^{\sigma_v} & T'\backslash v \ar[u]_\partial.
 }\eeqn 
 
 \item Let $\sigma_v: T\map T\backslash v$ be a degeneracy map and $\partial: T' 
\map T$ be a face map induced by one of the adjacent edges to v or the removal 
of $v$ (if that is possible). Then $T'= T\backslash v$ and the following diagram 
commutes:
 
 \beqn
 \xymatrix{
 T\backslash v\ar[rd]_\partial \ar[rr]^{\id_{T\backslash v}}&& T\backslash v\\
 & T \ar[ur]_{\sigma_v}.
 }
 \eeqn
\end{enumerate}

\begin{rem} \label{specCase}
 We have left out the following special cases of dendroidal identities:
 
 \begin{itemize}
  \item Outer face identities when $T$ has less than three vertices.
  
  \item Predictable identities expressing the compatibility of the face and 
degeneracy maps with isomorphisms (see, for instance, Section 2.3.1 of 
\cite{MoeLect}). 
 \end{itemize}
\end{rem}

\subsection{The model structure on $\dset$} \label{denModel}
The formalism of model categories was introduced by Quillen 
\cite{HomotopicalAlgebra} as an abstract framework for homotopy theory. For a 
modern treatment the readers may refer to \cite{Hov,Hir}. We review the model 
structure on $\dset$ constructed by Cisinski--Moerdijk \cite{CisMoe} that 
generalizes the Joyal model structure on $\sset$. 

The construction of the model structure on $\dset$ exploits the Cisinski model 
structure on any category of presheaves \cite{CisModel} (see the appendix in Section \ref{App}) 
and also a transfer principle. Typically one begins with certain desired features on the model 
structure based on intended applications. Keeping in mind 
the Joyal model structure on $\sset$ it is natural to expect that in the would 
be model structure on $\dset$ (certain) monomorphisms should be cofibrations, 
some class of objects (generalizing $\infty$-categories) should be fibrant, and 
certain morphisms (generalizing categorical equivalences) should be weak 
equivalences.

A monomorphism of dendroidal sets $X\map Y$ is {\em normal} if for any 
$T\in\Omega$, the action of $\Aut(T)$ on $Y(T)\setminus X(T)$ is free. If $e$ is 
an inner edge of a tree $T$, then one obtains an {\em inner horn inclusion} 
$\Lambda^e[T]\map\Omega[T]$, where $\Lambda^e[T]$ is obtained as the union of 
the images of all the elementary face maps apart from $\partial_e: T/e\map T$. A 
map of dendroidal sets is called an {\em inner anodyne extension} if it belongs 
to the smallest class of maps which is stable under pushouts, transfinite 
compositions and retracts, and which contains the inner horn inclusions. There 
is an adjunction $\adj{\tau_\mathrm{d}}{\dset}{\oper}{\N_\mathrm{d}}$, where 
$\tau_\mathrm{d}$ is called the {\em operadic realization} functor. The model 
structure on $\dset$ can be described as (see Theorem 2.4 of \cite{CisMoe}): 
\begin{itemize}                                                                                                                                              
\item the cofibrations are the {\em normal monomorphisms};      
\item the fibrant objects are the $\infty$-operads;
\item the fibrations between fibrant objects are the inner Kan fibrations (see 
\cite{MoeWei2} and section 2.1 of \cite{CisMoe}), whose image under 
$\tau_\mathrm{d}$ is an operadic fibration, i.e., a fibration in the canonical model structure on operads;
\item the class of weak equivalences is the smallest class $\mathtt{W}$ of maps 
in $\dset$ satisfying: \begin{enumerate}[(a)]
    \item $2$-out-of-$3$ property;
    \item inner anodyne extensions are in $\mathtt{W}$;
    \item trivial fibrations between $\infty$-operads are in 
$\mathtt{W}$.\end{enumerate}

\end{itemize}  We omit further details but explain an additional property of 
this model category that is relevant for our purposes. Let $\kappa$ be regular 
cardinal. A category $\cA$ is said to be {\em $\kappa$-accessible} if there is a 
small category $\cC$, such that $\cA\cong\Ind_\kappa(\cC)$. A {\em locally 
$\kappa$-presentable} category is a $\kappa$-accessible category that, in 
addition, possesses all small colimits. A category is {\em locally presentable} 
if it is locally $\kappa$-presentable for some regular cardinal $\kappa$. If 
$\cC$ is a small category, the category of presheaves on $\cC$ (e.g., 
$\dset=\Fun(\Omega^\op,\Set)$) is locally $\omega$-presentable (see, for 
instance, \cite{AdaRos}). Recall that a model category is said to be {\em 
combinatorial} if it is cofibrantly generated and its underlying category is 
locally presentable. It is also shown in Proposition 2.6 of \cite{CisMoe} that 
the model category $\dset$ is combinatorial. The set of generating cofibrations 
$I$ consists of the boundary inclusions of trees, i.e., 
$I=\{\partial\Omega[T]\map\Omega[T]\,|\,T\in\Omega\}$. 

\section{$C^*$-algebras associated with trees: noncommutative dendrices} 
\label{treeAlg}
The description of a tree presented in the previous section differs slightly 
from the one that one might encounter in graph theory. For instance, in the 
graph algebra literature a {\em directed graph} $G=(E^0,E^1,r,s)$ consists of 
two (countable) sets $E^0,E^1$ and functions $r,s:E^1\map E^0$. The elements of 
$E^0$ are called the {\em vertices} and the those of $E^1$ are called the {\em 
edges} of $G$. For an edge $e$, the vertex $s(e)$ is its {\em source} and the 
vertex $r(e)$ is its {\em range}. Thus in a directed graph one does not have edges 
attached only to one vertex like the leaves or the root that we considered in 
the previous section. In a graph a {\em path of length $n$} is a sequence $\mu = 
e_1e_2\cdots e_n$ of edges, such that $s(e_i)=r(e_{i+1})$ for all $i\leqslant 
i\leqslant n-1$. For such a path $\mu = e_1e_2\cdots e_n$ we denote by 
$\edge(\mu)=\{e_1,e_2,\cdots,e_n\}$ the set of all edges traversed by it. 

The $C^*$-algebra associated with a tree that we are going to describe shortly 
is to some extent inspired by the construction of noncommutative simplicial 
complexes in \cite{CunSimp}. However, we design the $C^*$-algebra from the edges 
of the tree, since from the categorical (or operadic) viewpoint the edges are 
more fundamental than the vertices. 

\begin{defn}Given a set $G$ of generators and a set $R$ of relations the {\em universal 
$C^*$-algebra}, denoted by $C^*(G,R)$, is a $C^*$-algebra equipped with a set 
map $\iota: G\map C^*(G,R)$ that satisfies the following universal property: for 
every $C^*$-algebra $A$ and a set map $\iota_A: G\map A$, such that the 
relations $R$ are fulfilled inside $A$, there is a unique $*$-homomorphism 
$\theta: C^*(G,R)\map A$ satisfying $\theta\circ\iota =\iota_A$.
\end{defn} This is a subtle concept - for instance, if $G=\{x\}$ and $R=\emptyset$, then the universal 
$C^*$-algebra $C^*(G,R)$ does not exist. In other words, free (or relation free) 
objects do not exist in the category of $C^*$-algebras. It follows from two simple facts:
\begin{enumerate}
 \item Every element in a $C^*$-algebra has a finite norm $\|\cdot\|$, i.e., a real number.
 \item Every $*$-homomorphism is norm decreasing, i.e., $\phi:A\map B$ $\implies$ $\|\phi(a)\| \leqslant \|a\|$.
\end{enumerate} If $C^*(G=\{x\},R=\emptyset)$ were to exist, then the generator $x$ would have a finite norm $\|x\|$. 
Now choose any $C^*$-algebra $A$ and an element $a\in A$ with $\|a\| > \|x\|$ that can evidently be done. 
Then it is manifestly clear that one cannot find the desired 
$*$-homomorphism $\iota: C^*(G=\{x\},R=\emptyset)\map A$ with $\iota(x) = a$ that satisfies requirement (2) above.
If the relations $R$ put a non-strict bound on the norm of 
each generator, then typically one obtains an interesting nontrivial universal 
$C^*$-algebra (although it can be trivial in certain cases). 

\begin{defn} Given any tree 
$T=(E^0,E^1)$ (viewed as a graph as described above) we define its {\em associated 
$C^*$-algebra} as the universal unital $C^*$-algebra generated by $\{q_e\,|\, e\in E^1\}$ satisfying

\begin{enumerate}
 \item $q_e\geqslant 0$ for all $e\in E^1$, 
 \item $\sum_{e\in E^1} q_{e} = 1$, and
 \item $q_{e_1}q_{e_2}\cdots q_{e_n} = 0$ unless there is a path $\mu$ with 
$\{e_1,e_2,\cdots , e_n\}\subseteq\edge(\mu)$ (inclusion of sets disregarding order).
 \end{enumerate}
 \end{defn}

\begin{rem} Let us briefly clarify the motivation behind the relations. 
\begin{itemize}
 \item The relations (1) and (2) clearly put a bound on the norm of each generator and hence the 
existence of the universal $C^*$-algebra is clear. 
\item Relation (3) encodes the compositional nature of trees. It retains those terms that lie in a path (and hence bound a simplex).
However, it also retains reorderings and repetitions of edges within the path because we want the canonical abelianization map to be surjective 
(see Remark \ref{abel} and Example \ref{abelLinear}).
\end{itemize}
\end{rem}

\begin{ex}
Note that repetitions are 
allowed amongst $e_i$'s in relation (3) above. For instance, if $T$ is 
\beqn
\xymatrix{
  & *{\bullet}\ar@{-}[dr]_{l_1}\ar@{}|{\,\,\,\,\,\,\,\,\,\, y} &  & 
*{\bullet}\ar@{-}[dl]^{l_2}\ar@{}|{\,\,\,\,\,\,\,\,\,\, z}\\
 &  & *{\bullet}\ar@{-}[dr]_{e_{1}}\ar@{}|{\,\,\,\,\,\,\,\,\,\, v} &  & 
*{\bullet}\ar@{-}[dl]^{e_{2}}\ar@{}|{\,\,\,\,\,\,\,\,\,\, w}\\
 &  &  & *{\bullet}\ar@{-}[d]^r\ar@{}|{\,\,\,\,\,\,\,\,\,\, x}\\
 &  &  & *{\bullet}}
\eeqn then $q_{l_2}q_{e_1}q_{e_2} = q_{l_1}q_{l_2} = q_{e_2}q_{e_1}q_{l_2} = 0$, whereas $q_r 
q_{e_1} q_{l_1} \neq 0$ and $q_{e_1} q_{l_2} q_{e_1} \neq 0$.
\end{ex}

\noindent
Given any non-planar rooted tree $T$ we construct its associated $C^*$-algebra $D(T)$ as 
follows:                                                                         
                                                             
\begin{enumerate}[(a)]
\item insert a vertex at 
each of the top tip of the leaves (if any) and the bottom tip of the root;       
\item construct the 
universal $C^*$-algebra of the modified tree as explained above.                                                  
\end{enumerate}                                                                  
For instance given the tree

\beq
\xymatrix{
  & &  & *{\,}\ar@{-}[dr]_{l_1} &  & *{\,}\ar@{-}[dl]^{l_2}\\
 &  & *{\bullet}\ar@{-}[dr]_{e_{1}}\ar@{}|{\,\,\,\,\,\,\,\,\,\, v} &  & 
*{\bullet}\ar@{-}[dl]^{e_{2}}\ar@{}|{\,\,\,\,\,\,\,\,\,\, w}\\
 &  &  & *{\bullet}\ar@{-}[d]^r\ar@{}|{\,\,\,\,\,\,\,\,\,\, x}\\
 &  &  & *{\,}\ar@{}|{\,\,\,\,\,\,\,\,\,\, ,}}
\eeq according to procedure (a) we modify the tree as                            
 
\beq
\xymatrix{
  & &  & *{\bullet}\ar@{-}[dr]_{l_1}\ar@{}|{\,\,\,\,\,\,\,\,\,\, y} &  & 
*{\bullet}\ar@{-}[dl]^{l_2}\ar@{}|{\,\,\,\,\,\,\,\,\,\, z}\\
 &  & *{\bullet}\ar@{-}[dr]_{e_{1}}\ar@{}|{\,\,\,\,\,\,\,\,\,\, v} &  & 
*{\bullet}\ar@{-}[dl]^{e_{2}}\ar@{}|{\,\,\,\,\,\,\,\,\,\, w}\\
 &  &  & *{\bullet}\ar@{-}[d]^r\ar@{}|{\,\,\,\,\,\,\,\,\,\, x}\\
 &  &  & *{\bullet}}
\eeq and then construct its universal $C^*$-algebra.

\begin{rem} \label{abel}
In the above construction we can add the relation that the generators commute, 
i.e., $q_e q_f = q_f q_e$ for all $e,f \in E^1$ to obtain a commutative 
$C^*$-algebra $D^\ab (T)$.
\end{rem}

\begin{defn}
 The $C^*$-algebra $D(T)$ associated with a non-planar rooted tree $T$ is called 
a {\em noncommutative dendrex}. Note that if $X\in\dset$ and $T\in\Omega$, then 
$X(T)$ is viewed as the set of $T$-shaped dendrices in $X$.
\end{defn}

\begin{ex} \label{abelLinear}
An object $[n]\in\Delta$ can be viewed as a linear tree $L_n$ as 
$$\leftarrow\bullet_1\leftarrow\cdots \leftarrow\bullet_n\leftarrow$$ (drawn 
horizontally instead of vertically with arrowheads inserted to indicated the 
direction). This association $[n]\mapsto L_n$ defines a fully faithful functor 
$\Delta\mono\Omega$ that produces the adjunction $\sset\rightleftarrows\dset$. 
After modification $L_n$ produces the following tree 
$$\bullet_0\leftarrow\bullet_1\leftarrow\cdots \leftarrow\bullet_{n+1},$$ whose 
associated $C^*$-algebra is the universal unital $C^*$-algebra generated by 
$n+1$ positive generators $\{q_1,\cdots, q_{n+1}\}$, such that $\sum_{i=1}^n q_i 
= 1$. Its associated commutative $C^*$-algebra (see Remark \ref{abel}) is 
isomorphic to $\C(\Delta^n)$, where $\Delta^n$ is the standard $n$-simplex (see 
Proposition 2.1 of \cite{CunSimp}). Our choice for the noncommutative dendrex 
construction was guided by this consideration. Observe that $D(L_0)=\CC$, since 
$[0]$ corresponds to the unit tree \beqn\xymatrix{{}\ar@{-}[d]\\ 
{}\ar@{}|{\,\,\,\,\,\,\,\,\,\, ,}}\eeqn whose modified tree is simply 
\beqn\xymatrix{*{\bullet}\ar@{-}[d]\\ *{\bullet}}\eeqn with only one edge. This 
phenomenon reflects the fact that the edges of a tree correspond to the colours 
of its associated operad.
\end{ex}

\subsection{Functoriality} \label{func}  
The aim of this subsection is to establish the (contravariant) functoriality of 
the above construction $T\mapsto D(T)$ with respect to morphisms of $\Omega$. To 
this end we begin by defining the $*$-homomorphisms that the faces and 
degeneracies induce. If $\sigma_v: T\map T\backslash v$ is a degeneracy map (see 
subsection \ref{treeMap}) like 

\[
\begin{array}{ccc}
\xymatrix{*{\,}\ar@{-}[dr] &  & *{\,}\ar@{-}[dl]\\
 & *{\bullet}\ar@{-}[dr]_{e_{1}} &  & *{\,}\ar@{-}[dr] &  & *{\,}\ar@{-}[dl]\\
 &  & *{\bullet}\ar@{-}[dr]_{e_{2}}\ar@{}|{\,\,\,\,\,\,\,\,\,\, v} &  & 
*{\bullet}\ar@{-}[dl]\\
 &  &  & *{\bullet}\ar@{-}[d]\\
 &  &  & *{\,}}
 & \xymatrix{\\\\\ar[r]^{\sigma_{v}} & *{}}
 & \xymatrix{*{\,}\ar@{-}[dr] &  & *{\,}\ar@{-}[dl]\\
 & *{\bullet}\ar@{-}[ddrr] &  & *{\,}\ar@{-}[dr] &  & *{\,}\ar@{-}[dl]\\
 & \,\ar@{}[r]|{\,\,\,\, e} &  &  & *{\bullet}\ar@{-}[dl]\\
 &  &  & *{\bullet}\ar@{-}[d]\\
 &  &  & *{\,}}
\end{array}\]
then define $\sigma_v^*: D(T\backslash v)\map D(T)$ as $$q_f\mapsto 
\begin{cases} q_f \text{\; if $f\neq e$,} \\ q_{e_1} + q_{e_2} \text{\; 
otherwise.}                         
\end{cases}.$$

\begin{rem}
The notation employed in the definition of $\sigma_v^*$ is potentially 
ambiguous. In the domain $q_f$ is a generator of $D(T\backslash v)$ and in the 
codomain it is a generator of $D(T)$. One should ideally differentiate them by 
writing $q_f^{T\backslash v}$ and $q_f^T$ (or something similar) to indicate the 
dependence on the tree. For notational simplicity we avoid doing this. 
\end{rem}

\begin{lem}
 The map $\sigma_v^*:D(T\backslash v)\map D(T)$ is a $*$-homomorphism.
\end{lem}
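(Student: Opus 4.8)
The plan is to invoke the universal property of the source algebra $D(T\backslash v)$. Recall that $D(T\backslash v)=C^*(G,R)$ with generators $G=\{q_f\mid f\in E^1(T\backslash v)\}$ and $R$ the relations (1)--(3), so to produce $\sigma_v^*$ it suffices to check that the prescribed images $\sigma_v^*(q_f)\in D(T)$ satisfy these three defining relations of $D(T\backslash v)$ inside $D(T)$. The universal property then yields a unique $*$-homomorphism extending the assignment on generators, and it is automatically unital once relation (2) is verified. So the entire task reduces to verifying (1)--(3) for the elements $\sigma_v^*(q_f)$.

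First I would dispose of the two easy relations. Positivity (relation (1)) is immediate: for $f\neq e$ the image $q_f$ is a positive generator of $D(T)$, while $\sigma_v^*(q_e)=q_{e_1}+q_{e_2}$ is a sum of positive elements, hence positive. For the partition of unity (relation (2)), note that $E^1(T\backslash v)=(E^1(T)\setminus\{e_1,e_2\})\cup\{e\}$, so
$$\sum_{f\in E^1(T\backslash v)}\sigma_v^*(q_f)=(q_{e_1}+q_{e_2})+\sum_{\substack{f\in E^1(T)\\ f\neq e_1,e_2}}q_f=\sum_{f\in E^1(T)}q_f=1,$$
using relation (2) in $D(T)$.

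The main obstacle is relation (3), the vanishing/path condition, because the substitution $q_e\mapsto q_{e_1}+q_{e_2}$ turns a single word into a sum of words that must all be controlled simultaneously. The device I would use is the edge-contraction map $\pi\colon E^1(T)\map E^1(T\backslash v)$ sending $e_1,e_2\mapsto e$ and fixing every other edge. The combinatorial heart of the argument is the claim that $\pi$ carries paths to paths: since $v$ has exactly one incoming and one outgoing edge, every directed path in $T$ either avoids both $e_1$ and $e_2$ or traverses them consecutively, so applying $\pi$ (and collapsing the resulting two copies of $e$ to one) produces a genuine path in $T\backslash v$, with $\edge(\pi(\mu))=\pi(\edge(\mu))$. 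I expect this path-correspondence to be the only step requiring real care, though it is forced by the degree-two hypothesis on $v$.

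Granting the claim, I would finish by contraposition. Given $f_1,\dots,f_n\in E^1(T\backslash v)$ admitting no path of $T\backslash v$ containing $\{f_1,\dots,f_n\}$, expand $\sigma_v^*(q_{f_1})\cdots\sigma_v^*(q_{f_n})$ by distributing each factor $q_e\mapsto q_{e_1}+q_{e_2}$; every resulting monomial has the form $q_{g_1}\cdots q_{g_n}$ with $\pi(g_i)=f_i$ for all $i$. If such a monomial were nonzero in $D(T)$, relation (3) in $D(T)$ would furnish a path $\mu$ of $T$ with $\{g_1,\dots,g_n\}\subseteq\edge(\mu)$, whence $\{f_1,\dots,f_n\}=\pi(\{g_1,\dots,g_n\})\subseteq\edge(\pi(\mu))$ with $\pi(\mu)$ a path of $T\backslash v$, contradicting the hypothesis. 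Hence every monomial vanishes and so does the whole product, establishing relation (3). With all three relations verified, the universal property of $D(T\backslash v)$ delivers the desired $*$-homomorphism $\sigma_v^*$.
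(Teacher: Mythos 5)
Your proposal is correct and follows essentially the same route as the paper: verify relations (1)--(3) for the images $\sigma_v^*(q_f)$ in $D(T)$ and invoke the universal property of $D(T\backslash v)$, with identical arguments for positivity and the partition of unity. The only difference is that where the paper disposes of relation (3) with ``one can check by inspection that if $f_1,f_2$ do not lie in a path in $T\backslash v$ then they cannot lie in a path in $T$,'' you spell out the same contrapositive via the edge-contraction map $\pi$ and the path-to-path correspondence, which is a more detailed rendering of the same idea.
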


\begin{proof}
 We need to verify that the set $\{\sigma_v^*(q_f)\,|\, \text{$f$ an edge in 
$T\backslash v$}\}$ satisfies the relations (1), (2), and (3) in $D(T)$ that 
define the universal $C^*$-algebra $D(T\backslash v)$. 
 
 For (1) note that $q_{e_1}$ and $q_{e_2}$ are both positive in $D(T)$ whence so 
is $q_{e_1} + q_{e_2}$. Clearly each $q_f$ is also positive in $D(T)$. Let 
$E^1(T)$ be the set of edges in $T$. We verify (2) by computing $$\sum_{f\in 
E^1(T\backslash v)}\sigma_v^* (q_f) = \sum_{f\neq e} q_f + (q_{e_1} + q_{e_2}) 
=\sum_{f\in E^1(T)} q_f = 1.$$ For (3) one can check by inspection that if 
$f_1,f_2$ are two edges in $T\backslash v$ that do not lie in a path, then they 
cannot lie in a path in $T$.
\end{proof}

Note that every face map can be viewed as an injective map on edges (or colours 
of the associated operad). Thus if $\partial_e: T/e\map T$ is an inner face map 
then define a $*$-homomorphism $\partial_e^*: D(T)\map D(T/e)$ as $$q_f\mapsto 
\begin{cases} q_f \text{\; if $f\neq e$,} \\ 0 \text{\; otherwise.}                         
\end{cases}.$$ Similarly, if $\partial_v: T/v\map T$ is an outer 
face map then define $\partial_v^*: D(T)\map D(T/v)$ as $$q_f\mapsto 
\begin{cases} q_f \text{\; if $f$ has not been removed,} \\ 0 \text{\; 
otherwise.}                                                                      
\end{cases}.$$ 

\begin{lem}
The maps $$\partial_e^*: D(T)\map D(T/e) \;  \text{ and } \;  \partial_v^*: D(T)\map D(T/v)$$ are $*$-homomorphisms.
\end{lem} 

\begin{proof}
One needs to again verify that the set $\{\partial_e^*(q_f)\,|\, \text{$f$ an 
edge in $T$}\}$ satisfies the relations (1), (2), and (3) in $D(T/e)$ that 
define the universal $C^*$-algebra $D(T)$.  Relations (1) and (2) are clearly satisfied; for relation (3) 
one needs to observe that if two edges $e,f$ in $T$ do not lie in a path, then 
this property continues to hold in $T/e$ or $T/v$. A similar argument is applicable to 
$\partial_v^*$.
\end{proof}

\begin{rem}
Observe that if $\theta: S\map T$ is an isomorphism in $\Omega$ then $\theta^*: 
D(T)\map D(S)$ acts on the generators as $q_e\mapsto q_{\theta^{-1}(e)}$. One 
can readily verify that $\theta^*$ is a unital $*$-homomorphism. 
\end{rem}
Let $\SCu$ denote the category of separable unital $C^*$-algebras with unit preserving 
$*$-homomorphisms. Extending the Gel'fand--Na{\u{\i}}mark duality $\SCu^\op$ is 
regarded as the category of compact Hausdorff noncommutative spaces with 
continuous maps.

\begin{prop} \label{functor}
 The association of a noncommutative dendrex with a tree $T\mapsto D(T)$ defines 
a functor $D:\Omega\functor\SCu^\op$.
\end{prop}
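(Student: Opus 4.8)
The plan is to exhibit $D$ as a functor by giving one uniform formula for the induced $*$-homomorphisms and then reading off functoriality from the fact that morphisms of $\Omega$ act on colours. Recall that a morphism $f\colon S\map T$ in $\Omega$ is by definition a map of coloured operads $\Omega[S]\map\Omega[T]$ and is completely determined by its effect on colours; since colours are exactly edges, and the modification (a) inserts vertices but no edges, every such $f$ induces a function $\phi_f\colon E^1(S)\map E^1(T)$ between the (modified) edge sets. I would define $D(f)\colon D(T)\map D(S)$ on generators by
$$q_e\longmapsto\sum_{e'\in\phi_f^{-1}(e)}q_{e'}.$$
The first check is that this recovers the three families of maps already written down in subsection \ref{func}: for an inner face $\partial_e$ the map $\phi$ is the inclusion of edge sets, so the formula reads $q_e\mapsto 0$ and $q_f\mapsto q_f$; for an outer face $\phi$ is again an inclusion; for a degeneracy $\sigma_v$ one has $\phi^{-1}(e)=\{e_1,e_2\}$ on the merged edge, giving $q_e\mapsto q_{e_1}+q_{e_2}$; and for an isomorphism $\phi$ is a bijection. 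Thus the uniform formula agrees with the earlier definitions on all generators.

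Granting that each $D(f)$ is a well-defined unital $*$-homomorphism, functoriality becomes essentially formal. For the identity, $\phi_{\id}=\id$ gives $D(\id)=\id$. For composition, since operad maps compose on colours we have $\phi_{g\circ f}=\phi_g\circ\phi_f$, and expanding the double sum yields
$$D(f)\bigl(D(g)(q_u)\bigr)=\sum_{t\in\phi_g^{-1}(u)}\ \sum_{s\in\phi_f^{-1}(t)}q_s=\sum_{s\in(\phi_g\circ\phi_f)^{-1}(u)}q_s=D(g\circ f)(q_u),$$
so $D(f)\circ D(g)=D(g\circ f)$, which is exactly functoriality into $\SCu^\op$. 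A pleasant feature of this route is that it never requires one to verify independence of the factorization supplied by Lemma \ref{treeMap}: well-definedness of the individual maps already forces everything to cohere.

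The real content is therefore to check that for every $f$ the assignment $q_e\mapsto\sum_{e'\in\phi_f^{-1}(e)}q_{e'}$ respects the defining relations (1)--(3) of $D(T)$, so that the universal property produces $D(f)$. Relation (1) is clear since a finite sum of positive generators is positive; relation (2) holds because the fibres $\phi_f^{-1}(e)$ partition $E^1(S)$, whence $\sum_{e}D(f)(q_e)=\sum_{e'\in E^1(S)}q_{e'}=1$ (which also gives unitality). The crux is relation (3): I must show that if $q_{f_1}\cdots q_{f_n}=0$ in $D(T)$, then every monomial $q_{f_1'}\cdots q_{f_n'}$ occurring in the expansion of $\prod_i D(f)(q_{f_i})$ vanishes in $D(S)$. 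Contrapositively this is the geometric assertion that $\phi_f$ preserves the relation ``lie on a common path'': if edges $f_1',\dots,f_n'$ of $S$ all lie on one path, then $\phi_f(f_1'),\dots,\phi_f(f_n')$ all lie on one path in $T$.

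I expect this path-preservation to be the main obstacle, and the plan is to establish it by reducing to generators via Lemma \ref{treeMap}: it suffices to prove it for the elementary faces, degeneracies, and isomorphisms and then compose. For an isomorphism it is trivial, and for a degeneracy $\phi$ simply carries a path to its image path with $e_1,e_2$ collapsed to $e$. The cases needing care are the faces: for an outer face $T/v\map T$ the source is a subtree, so paths map to paths, while for an inner face $T/e\map T$ one must observe that a path in the contracted tree lifts to a path in $T$ (reinserting the contracted edge $e$ where necessary), so edges common to a path in $T/e$ remain common to a path in $T$. Since all of this is stable under composition, $\phi_f$ preserves common paths for arbitrary $f$, which finishes the verification; the compatibilities with isomorphisms left implicit in Remark \ref{specCase} are absorbed automatically, as $\phi$ is defined directly from the colour map.
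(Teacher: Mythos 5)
Your argument is correct, and it takes a genuinely different route from the paper's. The paper defines $D$ only on the elementary faces, degeneracies and isomorphisms (subsection \ref{func}) and then, invoking Lemma \ref{treeMap}, checks the dendroidal identities (I)--(VII) of subsection \ref{iden} on generators of the universal $C^*$-algebras so that these assignments cohere into a functor. You instead give one closed formula $D(f)(q_e)=\sum_{e'\in\phi_f^{-1}(e)}q_{e'}$ depending only on the colour map $\phi_f$ of the operad morphism $f$, not on any chosen decomposition; since colour maps compose, functoriality and preservation of identities reduce to the double-sum computation you display, and no dendroidal identity ever needs to be verified. The cost is that relations (1)--(3) must be checked for an arbitrary $f$ in one stroke, and the only nontrivial point --- that $\phi_f$ sends sets of edges lying on a common path to sets of edges lying on a common path --- is exactly where Lemma \ref{treeMap} re-enters, reducing to the elementary generators, for which your inspection (subtree inclusion for outer faces, lifting a path across the contracted edge for inner faces, collapsing $e_1e_2$ to $e$ for degeneracies) is essentially the same bookkeeping the paper performs when verifying (IV) and (VI). So both arguments consume the same combinatorial input, but yours packages it so that the well-definedness/coherence issue disappears, which is a genuine gain in rigor; the paper's version stays closer to the presentation of $\Omega$ by generators and relations. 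One point worth making explicit if you write this up: the paths in relation (3) live in the \emph{modified} trees (with vertices inserted at the leaf tips and the root tip), but since that modification adds vertices and no edges, it affects neither the colour maps nor your path-preservation argument.
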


\begin{proof}
In view of Lemma \ref{treeMap} it suffices to show that the $*$-homomorphisms 
$\partial_e^*,\partial_v^*,\sigma_v^*$ and $\theta^*$ satisfy the face and 
degeneracy identities (see subsection \ref{iden}). Note that thanks to the 
universal property of universal $C^*$-algebras we simply need to verify that 
various combinations of these $*$-homomorphisms governed by the identities agree 
on generators.

It is easy to verify that identities (I), (II), (III), and (V) are satisfied. 
The point is to observe that the order in which a certain number of generators 
are sent to $0$ or sums of two other generators does not affect the final 
outcome.

For (IV) let us suppose that the tree around $e$ looks like below

\beqn
\xymatrix{*{}\ar@{..}@/^1pc/[rr]^{\text{$n$ leaves}}\ar@{-}[dr]_{l_{1}} &   & 
*{}\ar@{-}[dl]^{l_{n}}\\
   & *{\bullet}\ar@{}|{\,\,\,\,\,\,\,\,\,\, v}\ar@{-}[d]^e\\
   & *{\bullet}\ar@{}|{\,\,\,\,\,\,\,\,\,\, w}\ar@{-}[d]\\
    & *{\bullet}\ar@{}|{\,\,\,\,\,\,\,\,\,\, x}\ar@{-}[d]\\
   & *{}\ar@{}|{\,\,\,\,\,\,\,\,\,\, .}}
\eeqn Now $\partial_z^*\partial_e^*$ will first send $q_e$ to $0$ and then 
$q_{l_1},\cdots, q_{l_n}$ to $0$. One the other hand $\partial_w^*\partial_v^*$ 
will first send $q_{l_1},\cdots , q_{l_n}$ to $0$ and then $q_e$ to $0$. The end 
result is evidently the same. 

For (VI) we begin with the commutative diagram 

\beqn
 \xymatrix{
 T \ar[r]^{\sigma_v} & T\backslash v \\
 T'\ar[u]^\partial\ar[r]^{\sigma_v} & T'\backslash v \ar[u]_\partial.
 }\eeqn Let us suppose that the face map $\partial$ removes edges $f_1,\cdots, 
f_n$. Since $T'$ still contains $v$ and its two adjacent edges (say $e_1$ and 
$e_2$), one can merge them to a new edge $e$. Thus $\partial^*$ is defined by 
$q_{f_i}\mapsto 0$ for $i=1,\cdots n$ and $\sigma_v^*$ by $q_e\mapsto q_{e_1} + 
q_{e_2}$. Hence it is clear that $\partial^*\sigma_v^* = \sigma_v^*\partial^*$. 
The verifications of (VII) and the special cases (see Remark \ref{specCase}) and 
similar and omitted.

Let us observe that $D(T)$ is unital for every $T\in\Omega$ and the 
$*$-homomorphisms $\partial_e^*,\partial_v^*,\sigma_v^*$ and $\theta^*$ are all 
unit preserving whence the essential image of the functor $D$ is indeed 
$\SCu^\op$. 

Note that for a map $\tau: S\map T$ in $\Omega$ the induced map is 
$\tau^*: D(T)\map D(S)$. It remains to check that the association $\tau \mapsto \tau^*$ 
respects composition of morphisms. It is clear that this association preserves 
composition of face maps as well as composition of degeneracy maps. To complete the 
proof we now simply invoke Remark \ref{uniqueFac}.
\end{proof}

\section{Draw-Dendraw adjunction and the Bridge} \label{DrawDendraw}
For a small category $\cC$ let $\P(\cC)$ denote the category of $\Set$-valued 
presheaves on $\cC$, i.e., $\Fun(\cC^\op,\Set)$. Thus setting $\cC=\Omega$ we 
find $\P(\Omega)=\dset$. Since $\P(\SCu^\op)$ is cocomplete, using the covariant 
functoriality of the category of presheaves (via left Kan extenstion) one obtains 
the dashed functor below:

\beq \label{draw}
\xymatrix{
\Omega\ar[r]^D \ar[d] & \SCu^\op \ar[d]\\
\dset \ar@{-->}[r] & \P(\SCu^\op),
}
\eeq where the vertical functors are the canonical Yoneda embeddings and the top 
horizontal functor $D:\Omega\functor\SCu^\op$ is the one constructed in the 
previous section (see Proposition \ref{functor}). Let $\draw$ denote the dashed 
functor in the above diagram \eqref{draw}. There is an adjunction 
$$\adj{\draw}{\dset}{\P(\SCu^\op)}{\den},$$ where the right adjoint $\den$ is 
defined as $[\den(Y)](T) = Y(D(T))$ for any $Y\in\P(\SCu^\op)$. 

\begin{defn} \label{drawDefn}
For any $X\in\dset$ the object $\draw(X)$ is its {\em $C^*$-algebraic drawing}. 
We call the functor $\draw$ (resp. $\den$) the {\em draw} (resp. {\em dendraw}) 
functor.
\end{defn}

\begin{rem} \label{colimits}
 In sheaf theoretic notation $\draw = D_!$ and $\den = D^*$. The dendraw functor 
$\den$ also admits a right adjoint $D_*:\dset\functor \P(\SCu^\op)$ whence it 
preserves colimits.
\end{rem}

\noindent
Recall from subsection \ref{denModel} that the category $\dset$ admits a 
combinatorial model structure. 

\begin{thm} \label{main}
There is a combinatorial model structure on $\P(\SCu^\op)$, such that the 
draw-dendraw adjunction $$\adj{\draw}{\dset}{\P(\SCu^\op)}{\den}$$ becomes a 
Quillen adjunction. 
\end{thm}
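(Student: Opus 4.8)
The plan is to first invoke the general construction of Appendix \ref{App} to endow $\P(\SCu^\op)$ with a combinatorial model structure, and then to verify the Quillen condition by checking $\draw$ on a set of generators. Since $\SCu$ is essentially small, $\P(\SCu^\op)=\Fun(\SCu,\Set)$ is a presheaf category and hence locally presentable, so it is a legitimate target for the Cisinski-type machinery of the Appendix. I would arrange the data feeding that machinery---a cylinder functor together with a generating set of anodyne maps---so that the resulting cofibrations are the (normal) monomorphisms and the generating trivial cofibrations contain the images under $\draw$ of a chosen set $J$ of generating trivial cofibrations of $\dset$. With the model structure so designed, the bulk of the argument reduces to a single compatibility statement for $\draw$.

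Recall that $\dset$ is cofibrantly generated with generating cofibrations $I=\{\partial\Omega[T]\mono\Omega[T]\,|\,T\in\Omega\}$, and that its cofibrations, the normal monomorphisms, form the weak saturation of $I$ under pushout, transfinite composition and retract (subsection \ref{denModel}). By the standard recognition criterion for Quillen pairs between cofibrantly generated model categories, it suffices to show that $\draw$ carries each map in $I$ to a cofibration and each map in $J$ to a trivial cofibration of $\P(\SCu^\op)$; the remaining maps are then handled automatically, because $\draw$ is a left adjoint (Remark \ref{colimits}) and so commutes with the saturation operations, while the cofibrations and trivial cofibrations of the target are themselves saturated classes.

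The preservation of trivial cofibrations holds essentially by fiat: having placed $\draw(J)$ among the generating anodyne maps of $\P(\SCu^\op)$, every map in $\draw(J)$ is a trivial cofibration. The genuine work is to show that $\draw$ preserves cofibrations, i.e. that $\draw(\partial\Omega[T]\mono\Omega[T])$ is a monomorphism for every tree $T$. Here I would use that $\draw$ preserves colimits together with the presentation of the boundary as the union of the images of the elementary faces, so that $\draw(\partial\Omega[T])=\colim\, y(D(S))$ ranges over the proper faces $S$ of $T$ while $\draw(\Omega[T])=y(D(T))$. The analysis then rests on the observation, already exploited in Proposition \ref{functor}, that every face map of $\Omega$ is injective on edges, so that the associated $*$-homomorphisms $D(T)\map D(S)$ fit together in a way that keeps the comparison map injective; one must also check that the free $\Aut(T)$-action witnessing the normality of $\partial\Omega[T]\mono\Omega[T]$ is carried by $D$ to a free action, so that $\draw$ of a normal monomorphism is again a normal monomorphism.

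The main obstacle is precisely this last point: a left Kan extension such as $\draw=D_!$ does not preserve monomorphisms in general, so preservation of cofibrations cannot be taken for granted and must be extracted from concrete features of the noncommutative dendrices functor $D$---its faithfulness, the injectivity of the face maps on colours, and the compatibility of $D$ with the symmetric-group actions on trees. Once $\draw$ is seen to preserve both cofibrations and trivial cofibrations, the recognition criterion yields that $(\draw,\den)$ is a Quillen adjunction, and combinatoriality of the model structure on $\P(\SCu^\op)$ is furnished by the Appendix. I expect the verification that $\draw$ sends normal monomorphisms to normal monomorphisms to be the most delicate and computation-heavy step, the trivial-cofibration half being built into the construction.
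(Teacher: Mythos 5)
Your proposal follows the paper's proof essentially verbatim: the model structure is the Cisinski structure of the Appendix, with the generating anodyne extensions chosen to contain $\draw$ of the generating trivial cofibrations of $\dset$ (so that half of the Quillen condition holds by construction), and the substantive step is exactly Proposition \ref{GenCof}, which shows that $\draw$ carries each boundary inclusion $\partial\Omega[T]\to\Omega[T]$ to a monomorphism by observing that face maps induce surjective $*$-homomorphisms $D(T)\to D(T')$, hence monomorphisms in $\SCu^\op$ and, after Yoneda, monomorphisms of representable presheaves whose union is a subobject of $h_{D(T)}$. One small correction: in the Cisinski model structure on $\P(\SCu^\op)$ the cofibrations are \emph{all} monomorphisms, so your concern about normality and about $D$ carrying the free $\Aut(T)$-actions to free actions is superfluous --- normality is only relevant on the $\dset$ side, and the recognition criterion only requires checking $\draw$ on the generating cofibrations, not on arbitrary normal monomorphisms.
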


\begin{proof}
The model structure on $\P(\SCu^\op)$ that we are referring to is constructed in Theorem \ref{DenModel} 
(see the appendix in Section \ref{App}). The left adjoint $\draw$ 
sends generating cofibrations in $\dset$ to cofibrations in $\P(\SCu^\op)$ (see 
Proposition \ref{GenCof} below) and generating trivial cofibrations to trivial 
cofibrations in $\P(\SCu^\op)$ (see Remark \ref{TrivCof} below). Now using Lemma 
2.1.20 of \cite{Hov} one concludes that the draw-dendraw adjunction is actually 
a Quillen adjunction.
\end{proof}

\begin{rem} \label{infAdj}
 Associated with any (combinatorial) model category $\cM$ there is an underlying 
(presentable) $\infty$-category $\N(\cM^\circ)$ (see Definition 1.3.1 of 
\cite{HinLoc}). Moreover, a Quillen adjunction between (combinatorial) model 
categories [like $\adj{\draw}{\dset}{\P(\SCu^\op)}{\den}$] induces an 
$\infty$-categorical adjunction between the underlying (presentable) 
$\infty$-categories [like 
$\adj{\mathbf{L}\draw}{\N(\dset^\circ)}{\N(\P(\SCu^\op)^\circ)}{\mathbf{R}\den}$] 
(see Proposition 1.5.1 of \cite{HinLoc} and Theorem 2.1 of \cite{Maz-Gee}). 
Although we are mainly interested in the $\infty$-categorical adjunction pair 
$(\mathbf{L}\draw,\mathbf{R}\den)$, it is often convenient to have at our 
disposal an explicit Quillen adjunction modelling it.
\end{rem}

\begin{rem} \label{NewClassification}
Viewing $\SCu^\op$ inside the category of presheaves $\P(\SCu^\op)$ via the 
Yoneda functor we obtain a new homotopy theory for (the opposite category of) 
separable unital $C^*$-algebras, whose weak equivalences are called {\em weak 
operadic equivalences}. This new class of weak operadic equivalences is potentially interesting 
in its own right. The weak operadic equivalences on $\SCu^\op$ are different from 
those inherited from the model structure on $\Ind(\SCu^\op)$ (see \cite{BarJoaMe}) via 
the embedding $\SCu^\op\hookrightarrow\Ind(\SCu^\op)$. These two classes of weak equivalences 
on $\SCu^\op$ give rise to different homotopy theories. The class of weak operadic equivalences 
is not contained in the class of standard homotopy equivalences on $\SCu^\op$ (see Remark \ref{OperCont}); 
it is not clear to the author whether the other containment holds. Those readers who prefer to 
stick to the category of $C^*$-algebras (and not venture into the 
category of presheaves) may try to classify the objects in it up to weak operadic equivalences. 
\end{rem} 

\begin{rem}\label{opendSet}
A vertex that has no incoming edges is called a {\em stump}, e.g., in the 
$0$-corolla \beqn\xymatrix{*{\bullet}\ar@{-}[d]\\ {}\ar@{}|{\,\,\,\,\,\,\,\,\,\, 
}}\eeqn the top vertex is a stump. A tree devoid of stumps is called an {\em 
open tree}. Let $\Omega_o$ denote the full subcategory of $\Omega$ spanned by 
the open trees. The canonical inclusion $\Omega_o\hookrightarrow\Omega$ induces 
an adjunction $\dset_o:=\P(\Omega_o)\rightleftarrows\P(\Omega)=\dset$, such that 
the left adjoint $\dset_o\hookrightarrow\dset$ is fully faithful. The objects of 
$\dset_o$ are called {\em open dendroidal sets}. The category $\dset_o$ inherits 
a combinatorial model structure via the adjunction 
$\dset_o\rightleftarrows\dset$ making it a Quillen pair (see Section 2.3 of 
\cite{HeuHinMoe}). The fully faithful functor $\sset\functor\dset$ factors 
through $\dset_o$. The fibrant objects of $\dset_o$ are $\infty$-operads without 
constants. It was noticed by I. Moerdijk that our construction of the 
noncommutative dendrices functor does not distinguish between a leaf and an 
edge, whose top vertex is a stump; in particular, the $C^*$-algebra associated 
with the unit tree and the $0$-corolla are both $\CC$. Thus our draw-dendraw 
adjunction should be restricted to open dendroidal sets via the composite 
adjunction $$\dset_o\rightleftarrows \dset\rightleftarrows\P(\SCu^\op).$$ 
\end{rem}

So far we have constructed the solid adjunctions in the following diagram of 
$\infty$-categories: 
\beqn
\xymatrix{
\N(\dset_o^\circ)\ar@/^1pc/[rrd] &&&& 
\N(\P(\SCu^\op)^\circ)\ar@/^1pc/[lld]^{\mathbf{R}\den} \ar@/^1pc/@{-->}[rrd]\\
&& \N(\dset^\circ)\ar@/^1pc/[rru]^{\mathbf{L}\draw}\ar@/^1pc/[llu] &&&& \uNS . 
\ar@/^1pc/@{-->}[llu]
}
\eeqn Now we define the $\infty$-category of noncommutative spaces $\uNS$. Then 
we complete the connection between $\infty$-operads and noncommutative spaces 
via a sequence of $\infty$-categorical adjunctions. The dashed pair above 
actually represents a zigzag of adjunctions.

\subsection{The rest of the bridge between $\uNS$ and $\N(\P(\SCu^\op)^\circ)$} 
\label{otherAdj}
Earlier we constructed the compactly generated $\infty$-category of 
pointed noncommutative spaces generalizing the category of pointed compact noncommutative spaces (see Definition 2.13 of \cite{MyNSH}) . 
Let $\Csep^\op$ denote the opposite topological category of separable $C^*$-algebras with all 
(not necessarily unit preserving) $*$-homomorphisms. We view it as a topological category 
by endowing the morphism sets with the point-norm topology. Let $\iCsep^\op$ denote the topological 
nerve of $\Csep^\op$. It is shown in Proposition 2.7 of \cite{MyNSH} 
that $\iCsep^\op$ admits finite colimits.
\begin{defn}
We set $\iNS=\Ind_\omega(\iCsep^\op)$ and call it the compactly generated $\infty$-category of 
pointed noncommutative spaces.
\end{defn}
Similarly, there exists a compactly generated $\infty$-category $\uNS$ of noncommutative (unpointed) spaces 
whose construction is outlined below. 
\begin{defn}
Let $\cC$ denote the opposite of the topological category of separable unital 
$C^*$-algebras with unit preserving $*$-homomorphisms. We again view it as a topological category 
by endowing the morphism sets with the point-norm topology. 
\end{defn} Here we have included the zero $C^*$-algebra in 
the topological category $\cC$. The zero $C^*$-algebra should be viewed as the (unital) $C^*$-algebra of 
continuous functions on the empty space. Therefore, for every separable unital $C^*$-algebra $A$ 
there is a unique unital $*$-homomorphism $A\map 0$, i.e., the opposite category $\cC$ has an initial object. But the 
zero $*$-homomorphism $0\map A$ is {\em not unital} unless $A=0$.
\begin{defn}
Let $\NSf$ denote the topological nerve of the topological category $\cC$. Here it is vitally important to consider 
the point-norm topology on the morphism spaces while constructing the topological nerve.
\end{defn} One can show as in Proposition 2.7 
of \cite{MyNSH} that $\NSf$ admits finite colimits. For the rest of this section 
we set $\Ind=\Ind_\omega$ that denotes the $\infty$-categorical ind-completion.
\begin{defn} \label{NSpace}
We set $\uNS:=\Ind(\NSf)$ and call it the compactly generated $\infty$-category 
of (unpointed) noncommutative spaces.\end{defn} 

\begin{rem}
This $\infty$-categorical construction of noncommutative spaces $\uNS$ is 
simple and practical. It incorporates homotopy theory and analysis in a 
systematic manner; the analytical aspects are contained within the world of 
$C^*$-algebras. More complicated topological algebras like pro $C^*$-algebras 
can be viewed within this setup via the homotopy theory of diagrams of 
$C^*$-algebras. The mechanism is explained in our earlier work 
\cite{MyNSH,MyColoc}.
\end{rem}

There is a canonical fully faithful embedding of (topological) categories 
$\SCu^\op\hookrightarrow \cC$. This functor induces an adjunction of the 
corresponding categories of presheaves $\P(\SCu^\op)\rightleftarrows \P(\cC)$. A 
map $f: C\map D$ in $\cC$ is a {\em $C^*$-homotopy equivalence} if there is 
another map $g: D\map C$ and homotopies $fg\simeq\id_D$ and $gf\simeq\id_C$. The 
set of $C^*$-homotopy equivalences gives rise to a set of maps in $\P(\cC)$ that 
eventually gives rise to another set of maps in $\P(\SCu^\op)$ via the 
adjunction $\P(\SCu^\op)\rightleftarrows \P(\cC)$.

\begin{defn}[Mixed model structure on $\P(\SCu^\op)$] \label{mixModel}
 The left Bousfield localization of the combinatorial model category 
$\P(\SCu^\op)$ equipped with the operadic model structure (see Definition 
\ref{DenModel}) along the set of maps induced by the $C^*$-homotopy equivalences 
is the {\em mixed model structure} on $\P(\SCu^\op)$. We denote the mixed model 
category by $\P(\SCu^\op)_\mix$ that again turns out to be combinatorial.
\end{defn}

The Bousfield localization $\P(\SCu^\op)\map \P(\SCu^\op)_\mix$ of combinatorial 
model categories induces an adjunction of underlying presentable 
$\infty$-categories 
$\N(\P(\SCu^\op)^\circ)\rightleftarrows\N(\P(\SCu^\op)_\mix^\circ)$ that 
exhibits $\N(\P(\SCu^\op)_\mix^\circ)$ as a localization of 
$\N(\P(\SCu^\op)^\circ)$. Let $\theta$ denote the composition of the functors 
$$\cC\overset{j}{\hookrightarrow}\P(\cC)\functor\P(\SCu^\op)\overset{(-)^\f}{
\functor}\P(\SCu^\op)_\mix^\f ,$$ where $j$ is the Yoneda embedding, 
$\P(\SCu^\op)_\mix^\f$ is the full subcategory of (bi)fibrant objects of 
$\P(\SCu^\op)_\mix$, and $(-)^\f$ denotes a fibrant replacement functor in the 
mixed model category $\P(\SCu^\op)_\mix$. Let us view $\P(\SCu^\op)_\mix^\f$ as 
a {\em relative category} in the sense of \cite{BarKan} via the weak 
equivalences inherited from the model category $\P(\SCu^\op)_\mix$. We can also 
view $\cC$ as a relative category with the $C^*$-homotopy equivalences as the 
weak equivalences.

\begin{lem}
 The functor $\theta:\cC\map\P(\SCu^\op)_\mix^\f$ is a morphism of relative 
categories.
\end{lem}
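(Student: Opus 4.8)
The plan is to verify directly that $\theta$ carries weak equivalences to weak equivalences, since a morphism of relative categories is by definition precisely such a functor. Here $\cC$ is equipped with the $C^*$-homotopy equivalences and $\P(\SCu^\op)_\mix^\f$ with the weak equivalences inherited from $\P(\SCu^\op)_\mix$. Thus it suffices to show that if $f\colon C\map D$ is a $C^*$-homotopy equivalence in $\cC$, then $\theta(f)$ is a weak equivalence in the mixed model category.

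First I would unwind the composition defining $\theta$ and compare it with the construction of the localizing set in Definition \ref{mixModel}. Applying the Yoneda embedding $j$ to the $C^*$-homotopy equivalence $f$ yields a map $j(f)$ of representable presheaves, which by construction is a member of the distinguished set of maps in $\P(\cC)$ associated with the $C^*$-homotopy equivalences. Applying the functor $\P(\cC)\functor\P(\SCu^\op)$ of the adjunction (restriction along $\SCu^\op\hookrightarrow\cC$) then produces exactly one of the maps along which $\P(\SCu^\op)$ is left Bousfield localized to form $\P(\SCu^\op)_\mix$. In short, the composite of the first two arrows of $\theta$ sends every $C^*$-homotopy equivalence into the generating set of the localization.

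Next I would invoke two standard facts. By the defining property of left Bousfield localization, every generating map of the localization is a weak equivalence in $\P(\SCu^\op)_\mix$; hence the image of $f$ under the first two arrows of $\theta$ is already a weak equivalence there. Finally, the fibrant replacement functor $(-)^\f$ preserves weak equivalences: using the natural weak equivalence $X\map X^\f$ and naturality applied to $f$, the two-out-of-three property forces $f^\f$ to be a weak equivalence whenever $f$ is. Composing these observations shows $\theta(f)$ is a weak equivalence in $\P(\SCu^\op)_\mix^\f$, as required.

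The argument is essentially formal, so I do not expect a serious obstacle; the one place demanding attention is the identification in the middle step, namely confirming that the composition $\cC\overset{j}{\hookrightarrow}\P(\cC)\functor\P(\SCu^\op)$ reproduces precisely the generating maps of the localization rather than some incompatible class. This amounts to checking that the functor $\P(\cC)\functor\P(\SCu^\op)$ appearing in the formula for $\theta$ is the same one used to transport the Yoneda images of the $C^*$-homotopy equivalences when defining the mixed model structure, which is immediate once both are seen to be restriction along $\SCu^\op\hookrightarrow\cC$. With that identification pinned down, preservation of weak equivalences follows from the definition of $\P(\SCu^\op)_\mix$ together with the elementary behaviour of fibrant replacement.
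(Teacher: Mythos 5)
Your argument is correct and is essentially the paper's own proof, which simply observes that the mixed model structure is by construction the left Bousfield localization along the maps induced by the $C^*$-homotopy equivalences, so $\theta$ preserves weak equivalences; you have merely spelled out the unwinding of the composite and the (standard) facts that generating maps of a localization become weak equivalences and that fibrant replacement preserves them. No discrepancy in approach.
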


\begin{proof}
We need to verify that $\theta$ preserves weak equivalences. Our construction of 
the mixed model category $\P(\SCu^\op)_\mix$ ensures this property (see Definition \ref{mixModel}).
\end{proof}

For any relative category $\cA$ we denote the underlying $\infty$-category by 
$\cA_\infty$ (see Section 1.2 of \cite{Maz-Gee}). The morphism of relative 
categories $\theta:\cC\map\P(\SCu^\op)_\mix^\f$ induces a morphism of underlying 
$\infty$-categories $\theta:\cC_\infty\map(\P(\SCu^\op)_\mix^\f)_\infty$. For any $\infty$-category $\cA$
there is an $\infty$-category of $\infty$-presheaves $\Pinf(\cA)$ (see \cite{LurToposBook}). Note the subtle difference 
in notation - for an ordinary category $\cA$ we denote by $\P(\cA)$ the category of $\Set$-valued presheaves on $\cA$; 
whereas for an $\infty$-category $\cA$ we denote by $\Pinf(\cA)$ the $\infty$-category of $\infty$-presheaves on $\cA$.

\begin{prop} \label{colimPres}
 The morphism of $\infty$-categories 
$\theta:\cC_\infty\map(\P(\SCu^\op)_\mix^\f)_\infty$ induces a colimit 
preserving functor 
$\tilde{\theta}:\Pinf(\cC_\infty)\map\N(\P(\SCu^\op)_\mix^\circ)$.
\end{prop}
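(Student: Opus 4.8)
The plan is to obtain $\tilde\theta$ as the essentially unique colimit\nobreakdash-preserving extension of $\theta$ along the Yoneda embedding, invoking the universal property of presheaf $\infty$\nobreakdash-categories as free cocompletions. Thus almost all of the work is to put the two $\infty$\nobreakdash-categories involved into the shape required by that universal property: the target must be cocomplete, and the source must be (essentially) small.

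First I would pin down the target. Since $\P(\SCu^\op)_\mix$ is a combinatorial model category (see Definition \ref{mixModel}), its underlying $\infty$\nobreakdash-category $\N(\P(\SCu^\op)_\mix^\circ)$ is presentable, and in particular admits all small colimits. I would also note that the canonical comparison functor $(\P(\SCu^\op)_\mix^\f)_\infty\map\N(\P(\SCu^\op)_\mix^\circ)$ is an equivalence: the inclusion of the (bi)fibrant objects $\P(\SCu^\op)_\mix^\f$ induces an equivalence of $\infty$\nobreakdash-localizations at the weak equivalences, because fibrant (and cofibrant) replacement is a weak equivalence. Composing $\theta$ with this equivalence lets me regard it as a functor $\theta\colon\cC_\infty\map\N(\P(\SCu^\op)_\mix^\circ)$ landing in a presentable, hence cocomplete, $\infty$\nobreakdash-category.

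Second I would check smallness of the source. The category $\SCu$ is essentially small, since a separable $C^*$\nobreakdash-algebra is determined up to isomorphism by a countable dense $*$\nobreakdash-subalgebra; adjoining the zero object keeps $\cC$ essentially small, and hence its underlying $\infty$\nobreakdash-category $\cC_\infty$ is essentially small as well. Consequently the presheaf $\infty$\nobreakdash-category $\P(\cC_\infty)=\Fun(\cC_\infty^\op,\S)$ is well defined and presentable, and it is the free cocompletion of $\cC_\infty$ under small colimits, with Yoneda embedding $j_\infty\colon\cC_\infty\hookrightarrow\P(\cC_\infty)$. Now I apply the universal property: for any cocomplete $\infty$\nobreakdash-category $\cD$, restriction along $j_\infty$ is an equivalence $\Fun^{L}(\P(\cC_\infty),\cD)\xrightarrow{\ \simeq\ }\Fun(\cC_\infty,\cD)$ between colimit\nobreakdash-preserving functors and all functors. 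Taking $\cD=\N(\P(\SCu^\op)_\mix^\circ)$, the functor $\theta$ corresponds to an essentially unique colimit\nobreakdash-preserving functor $\tilde\theta\colon\P(\cC_\infty)\map\N(\P(\SCu^\op)_\mix^\circ)$ with $\tilde\theta\circ j_\infty\simeq\theta$; concretely $\tilde\theta$ is the left Kan extension of $\theta$ along $j_\infty$, and its colimit preservation is built into the universal property.

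The genuinely nontrivial inputs are the two identifications in the second paragraph: that the underlying $\infty$\nobreakdash-category of the combinatorial model category $\P(\SCu^\op)_\mix$ is presentable (so that the free\nobreakdash-cocompletion principle applies at all), and that $(\P(\SCu^\op)_\mix^\f)_\infty\simeq\N(\P(\SCu^\op)_\mix^\circ)$, which is what allows $\theta$ to be viewed as landing in the full localization rather than only in the subcategory of bifibrant objects. Both are standard facts about (combinatorial) model categories and their $\infty$\nobreakdash-localizations, but they should be cited carefully, and the essential smallness of $\cC_\infty$ deserves an explicit word. Once these are granted there is no computation left: the existence, uniqueness, and cocontinuity of $\tilde\theta$ are all immediate consequences of the universal property of $\P(\cC_\infty)$.
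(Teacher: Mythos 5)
Your proposal is correct and follows essentially the same route as the paper: both invoke the equivalence $(\P(\SCu^\op)_\mix^\f)_\infty\simeq\N(\P(\SCu^\op)_\mix^\circ)$ induced by the inclusion of bifibrant objects, reduce to the universal property of presheaf $\infty$-categories as free cocompletions (Theorem 5.1.5.6 of \cite{LurToposBook}), and obtain cocompleteness of the target from combinatoriality of $\P(\SCu^\op)_\mix$ via presentability of its underlying $\infty$-category. Your explicit remark on the essential smallness of $\cC_\infty$ is a point the paper leaves implicit, but it does not change the argument.
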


\begin{proof}
The canonical inclusion $\P(\SCu^\op)_\mix^\f\hookrightarrow\P(\SCu^\op)_\mix$ 
induces an equivalence of underlying $\infty$-categories \cite{DwyKanHam} (see 
also Lemma 2.8 of \cite{Maz-Gee}). Thanks to the universal property of the 
category of presheaves $\Pinf(-)$ in the setting of $\infty$-categories (see 
Theorem 5.1.5.6 of \cite{LurToposBook}), it suffices to show that 
$(\P(\SCu^\op)_\mix^\f)_\infty\simeq\N(\P(\SCu^\op)_\mix^\circ)$ admits small 
colimits. Since the model category $\P(\SCu^\op)_\mix$ is combinatorial, its 
underlying $\infty$-category is presentable (see Corollary 1.5.2 of 
\cite{HinLoc}), i.e., it is cocomplete.
\end{proof}

\noindent
The following result is proven in Proposition 3.18 of \cite{BarJoaMe} using the 
formalism of weak (co)fibration categories \cite{BarSch}. 

\begin{lem} \label{presentable}
 There is an equivalence of $\infty$-categories $\Ind(\cC_\infty)\simeq\uNS$.
\end{lem}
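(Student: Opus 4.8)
The plan is to unwind the definitions and reduce the claim $\Ind(\cC_\infty)\simeq\uNS$ to a comparison between two presentations of the same compactly generated $\infty$-category. Recall that $\uNS$ was \emph{defined} as $\Ind(\NSf)$, where $\NSf$ is the opposite of the topological nerve of the category of separable unital $C^*$-algebras (now \emph{including} the zero algebra) with unit-preserving $*$-homomorphisms, endowed with the point-norm topology. On the other hand $\cC$ is the opposite of the topological category of the \emph{same} objects and morphisms, with $\NSf=\N(\cC)$, and $\cC_\infty$ denotes its underlying $\infty$-category obtained by treating $\cC$ as a relative category with the $C^*$-homotopy equivalences as weak equivalences. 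So the first step is to observe that the two sides differ only in how the topological enrichment of $\cC$ is converted into an $\infty$-category: on the $\uNS$ side one takes the topological (homotopy-coherent) nerve directly, while on the $\cC_\infty$ side one passes through the localization of $\cC$ at its $C^*$-homotopy equivalences.

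The key step is therefore a comparison lemma: the underlying $\infty$-category $\cC_\infty$ of the relative category $(\cC,\,C^*\text{-homotopy equivalences})$ is equivalent to the topological nerve $\N(\cC)=\NSf$. First I would invoke the standard fact (from the theory of weak (co)fibration categories in \cite{BarSch}, as the statement indicates the result comes from \cite{BarJoaMe}) that for a category enriched in a suitable model for spaces, localizing at the edgewise homotopy equivalences recovers the homotopy-coherent nerve. Concretely, the point-norm topology on the morphism spaces makes $\cC$ into a topological (hence simplicial, after applying $\mathrm{Sing}$) category, and the $C^*$-homotopy equivalences are precisely the morphisms that become homotopy equivalences in these mapping spaces; localizing a relative category of this form at such maps yields an $\infty$-category equivalent to the coherent nerve of the associated fibrant simplicial category. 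This identifies $\cC_\infty\simeq\NSf$.

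Once $\cC_\infty\simeq\NSf$ is established, the conclusion is immediate: applying $\Ind=\Ind_\omega$ to both sides and using that $\Ind$ is functorial in equivalences of $\infty$-categories gives
\[
\Ind(\cC_\infty)\simeq\Ind(\NSf)=\uNS,
\]
the last equality being Definition \ref{NSpace}. I would remark that one must check $\cC_\infty$ (equivalently $\NSf$) is essentially small and admits finite colimits so that $\Ind_\omega$ of it is genuinely a compactly generated presentable $\infty$-category; both points are already secured, the latter by the cited analogue of Proposition 2.7 of \cite{MyNSH} asserting that $\NSf$ has finite colimits.

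The main obstacle I anticipate is the comparison lemma $\cC_\infty\simeq\NSf$, i.e., verifying that localizing the topologically enriched category $\cC$ at its $C^*$-homotopy equivalences reproduces the homotopy-coherent nerve rather than some coarser or finer invariant. The subtlety is that the point-norm topology on $*$-homomorphism spaces is not obviously combinatorial, so one cannot directly quote a simplicial-model-category statement; this is exactly why the excerpt defers to the machinery of weak (co)fibration categories of \cite{BarSch} and the treatment in \cite{BarJoaMe}, where one shows that the relevant mapping spaces of the localization agree with the topological mapping spaces up to weak equivalence. Making this identification rigorous — ensuring the weak-fibration-category axioms hold for $\cC$ with the $C^*$-homotopy equivalences and that its localization computes the correct mapping spaces — is the technical heart of the argument, and it is precisely this that the authors have outsourced to \cite{BarJoaMe}.
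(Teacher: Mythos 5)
Your proposal is correct and takes essentially the same route as the paper: the paper offers no independent argument either, simply citing Proposition 3.19 of \cite{BarJoaMe} (the pointed analogue, proved via the weak (co)fibration category formalism of \cite{BarSch}) and remarking that the unpointed case follows by the same methods. Your decomposition into the comparison $\cC_\infty\simeq\NSf$ followed by applying $\Ind$ is precisely the content being outsourced to that reference, and you correctly locate the technical heart in verifying that localizing $\cC$ at the $C^*$-homotopy equivalences reproduces the topological nerve.
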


\begin{rem}
Actually Proposition 3.18 of \cite{BarJoaMe} proves a pointed version of the 
above Lemma. The desired result can be shown using similar methods and hence its 
proof is omitted. 
\end{rem}

\begin{thm} \label{NSadj}
 There is a diagram of adjunctions of presentable $\infty$-categories: \beqn
\xymatrix{
&& \N(\P(\SCu^\op)_\mix^\circ)\ar@/_1pc/[rr]\ar@/^1pc/[lld] 
&&\Pinf(\cC_\infty)\ar@/_1pc/[ll]_{\tilde{\theta}} \ar@/^1pc/[rrd]\\
\N(\P(\SCu^\op)^\circ)\ar@/^1pc/[rru]^{} &&&&&& \Ind(\cC_\infty)\simeq\uNS . 
\ar@/^1pc/[llu]
}
\eeqn
\end{thm}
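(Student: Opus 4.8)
The plan is to assemble the desired adjunction diagram from three adjunctions, two of which have already been produced in the preceding lemmas and propositions, leaving only the assembly and the verification of one new adjunction. First I would recall the pieces already in hand: the left Bousfield localization $\P(\SCu^\op)\map\P(\SCu^\op)_\mix$ induces, by Remark on Bousfield localizations, an adjunction of underlying presentable $\infty$-categories $\N(\P(\SCu^\op)^\circ)\rightleftarrows\N(\P(\SCu^\op)_\mix^\circ)$ exhibiting the latter as a localization; this supplies the leftmost adjunction in the diagram. At the far right, Lemma \ref{presentable} furnishes the equivalence $\Ind(\cC_\infty)\simeq\uNS$, and the standard universal property of $\Ind$-completion provides the adjunction $\P(\cC_\infty)\rightleftarrows\Ind(\cC_\infty)$ whose right adjoint is the inclusion and whose left adjoint is the localization onto the full subcategory of $\omega$-filtered-colimit-preserving presheaves (equivalently, $\Ind(\cC_\infty)$ is a localization of $\P(\cC_\infty)$). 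This gives the rightmost adjunction.

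The genuinely new ingredient is the middle adjunction $\tilde{\theta}:\P(\cC_\infty)\rightleftarrows\N(\P(\SCu^\op)_\mix^\circ)$. Here I would invoke Proposition \ref{colimPres}, which already establishes that $\tilde{\theta}$ is a colimit-preserving functor between presentable $\infty$-categories. The key observation is the adjoint functor theorem for presentable $\infty$-categories (Corollary 5.5.2.9 of \cite{LurToposBook}): any colimit-preserving functor between presentable $\infty$-categories admits a right adjoint. Since $\P(\cC_\infty)$ is a presheaf $\infty$-category and hence presentable, and $\N(\P(\SCu^\op)_\mix^\circ)$ is presentable because $\P(\SCu^\op)_\mix$ is combinatorial (Corollary 1.5.2 of \cite{HinLoc}), the functor $\tilde{\theta}$ automatically has a right adjoint. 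This produces the middle adjunction and completes the zigzag.

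The final step is purely formal: I would simply record that the three adjunctions compose into the displayed zigzag diagram of presentable $\infty$-categories, noting that all the categories appearing are presentable (the two presheaf categories, the two localizations, and $\uNS\simeq\Ind(\cC_\infty)$) and all the functors are either the localization/inclusion adjoints or $\tilde\theta$ and its adjoint. No commutativity of the diagram as a whole is being asserted — it is a zigzag, not a single adjunction — so nothing beyond the existence of each individual adjoint pair needs checking.

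I expect the main obstacle to be essentially bookkeeping rather than mathematical difficulty, since the hard analytic and homotopical work has been front-loaded into Lemma \ref{presentable} and Proposition \ref{colimPres}. The one point that requires genuine care is confirming the hypotheses of the adjoint functor theorem are met for $\tilde\theta$, namely that its source and target are both presentable; this is where I would lean on the combinatoriality of $\P(\SCu^\op)_\mix$ (established in Definition \ref{mixModel}) together with \cite{HinLoc}. Provided these presentability facts are cited correctly, the existence of the right adjoint to $\tilde\theta$, and hence the whole diagram, follows immediately.
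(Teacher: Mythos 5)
Your proposal is correct and follows essentially the same route as the paper: the leftmost adjunction comes from the Bousfield localization, the rightmost from the standard $\Ind$-completion adjunction together with Lemma \ref{presentable}, and the middle one from applying the adjoint functor theorem (Corollary 5.5.2.9 of \cite{LurToposBook}) to the colimit-preserving functor $\tilde{\theta}$ of Proposition \ref{colimPres}. The paper's proof is exactly this assembly, so no further comparison is needed.
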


\begin{proof}
The presentability of each $\infty$-category in the above diagram is clear. 
Observe that $\tilde{\theta}:\Pinf(\cC_\infty)\functor\N(\P(\SCu^\op)_\mix^\circ)$ 
is a colimit preserving functor between presentable $\infty$-categories (see 
Proposition \ref{colimPres}). Hence using the Adjoint Functor Theorem (see 
Corollary 5.5.2.9 of \cite{LurToposBook}) we deduce that it admits a right 
adjoint. The existence of the adjunction pair $\Pinf(\cC_\infty)\rightleftarrows 
{\Ind(\cC_\infty)\simeq\uNS}$ is standard (see, for instance, Theorem 5.5.1.1 of 
\cite{LurToposBook}). The adjunction 
$\N(\P(\SCu^\op)^\circ)\rightleftarrows\N(\P(\SCu^\op)_\mix^\circ)$ has already 
been explained above.
\end{proof}

\begin{rem}
For the benefit of the reader we explain briefly the meaning and significance of 
this result. It is the author's perception that several results in the two paradigms 
of noncommutative geometry use very similar techniques, albeit in different contexts. 
For example, the constructions of the bivariant $\K$-theory category and the category of noncommutative 
motives are philosophically almost identical (only applied to different notions of spaces). 
That led to the vision of abstracting away the commonalities and providing a framework
whereby results can be tranferred back-and-forth creating synergies (cf. subsection 
\ref{BHVision}). In what follows we substantiate this assertion with a few potential 
directions of development.
\end{rem}

\section{Prospects: commutative spaces and graph algebras} \label{CNC}
It is known how to view commutatives spaces (or motives) inside their 
noncommutative counterparts in the algebro-geometric setting 
\cite{KonNotes,TabGarden,BluGepTab}. We briefly explain how the 
$\infty$-category of spaces (not necessarily compact) sits inside that of 
noncommutative spaces via a colocalization in the setting of Connes. We also 
highlight how noncommutative dendrices naturally interpolate between the two 
canonical notions of {\em building blocks}.

\subsection{Commutative spaces via colocalization} 
Let $\cS$ (resp. $\pS$) denote the $\infty$-category of spaces (resp. pointed 
spaces). It is shown in Theorem 1.9 (1) of \cite{MyColoc} that there is a fully 
faithful $\omega$-continuous functor $\pS\mono\iNS$. In the same vein one can 
show that there is a fully faithful $\omega$-continuous functor $\cS\mono\uNS$.

\begin{prop} \label{rAdjoint}
The fully faithful $\omega$-continuous functor $\pS\mono\iNS$ (as well as 
$\cS\mono\uNS$) admits a right adjoint, i.e., it is colimit preserving.
\end{prop}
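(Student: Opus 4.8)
The plan is to invoke the Adjoint Functor Theorem for presentable $\infty$-categories (Corollary 5.5.2.9 of \cite{LurToposBook}): a functor between presentable $\infty$-categories admits a right adjoint precisely when it preserves small colimits. Both $\pS$ and $\iNS=\Ind(\iCsep^\op)$ (resp. $\cS$ and $\uNS=\Ind(\NSf)$) are presentable, the latter two being compactly generated by construction, so the task collapses to upgrading the given $\omega$-continuity to preservation of \emph{all} small colimits. Since filtered colimits together with finite colimits generate every small colimit, and filtered colimits are preserved by hypothesis, first I would reduce the entire problem to showing that the embedding preserves finite colimits.

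To reach the finite colimits I would exploit compact generation. Writing $\pS\simeq\Ind(\cSf)$, the $\omega$-continuous embedding $\pS\mono\iNS$ is the unique filtered-colimit-preserving extension of its restriction $\varphi:\cSf\functor\iNS$, and $\varphi$ factors through the compact objects, sending a finite pointed space $X$ to the associated separable algebra $\C(X)\in\iCsep^\op$. Because the Yoneda embedding $\iCsep^\op\mono\Ind(\iCsep^\op)=\iNS$ preserves whatever finite colimits exist in $\iCsep^\op$, it suffices to check that $\varphi$ is right exact as a functor into $\iCsep^\op$. Granting this, Proposition 5.5.1.9 of \cite{LurToposBook} supplies a colimit-preserving extension of $\varphi$ along $\cSf\mono\Ind(\cSf)$, which by the universal property of filtered completion must coincide with our embedding; hence the embedding preserves all small colimits and the Adjoint Functor Theorem then yields the right adjoint. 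The argument for $\cS\mono\uNS$ is identical, with $\cSf$ and $\iCsep^\op$ replaced by the finite (unpointed) spaces and $\NSf$.

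The heart of the matter, and the step I expect to be the main obstacle, is the right exactness of $\varphi$. Concretely this is the statement that Gel'fand--Na{\u{\i}}mark duality intertwines finite (homotopy) colimits of compact spaces with finite (homotopy) limits of the associated $C^*$-algebras: the point goes to the unit $\CC$ (the initial object of $\iCsep^\op$), a wedge $X\vee Y$ goes to the fibre product $\C(X)\times_{\CC}\C(Y)$, and so on for the remaining finite diagrams. This homotopy-coherent compatibility is precisely the structural input underlying the construction of the embedding in \cite{MyColoc} (cf. Theorem 1.9 there), so in the write-up I would reduce to and cite that result rather than reprove the coherence of duality on finite diagrams from scratch. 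Once finite-colimit preservation is secured, every remaining step is formal.
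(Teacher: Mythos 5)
Your proposal is correct and follows essentially the same route as the paper: both arguments reduce to the observation that the embedding is $\Ind_\omega$ of the Gel'fand--Na{\u\i}mark functor $\cSf\map\iCsep^\op$ on compact objects, identify the right exactness (finite-colimit preservation) of that restriction as the key input, and then conclude formally from Lurie (the paper cites Proposition 5.3.5.13 directly, you route through Proposition 5.5.1.9 and the Adjoint Functor Theorem, which amounts to the same thing). The only quibble is your parenthetical that the basepoint goes to the unit $\CC$: in the pointed model a pointed space $(X,x)$ corresponds to $C_0(X\setminus\{x\})$, so the point goes to the zero algebra; this does not affect the argument.
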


\begin{proof}
 Due to the Gel'fand--Na{\u{\i}}mark correspondence there is a fully faithful 
functor $f:\cSf\mono\iCsep^\op$ that induces the fully faithful 
$\omega$-continuous functor $\Ind_\omega(f):\pS\mono\iNS$ of Theorem 1.9 (1) of 
\cite{MyColoc}. The functor $f$ preserves finite colimits whence it is right 
exact. Therefore, by Proposition 5.3.5.13 of \cite{LurHigAlg} the functor 
$\Ind_\omega(f)$ admits a right adjoint. The proof of the corresponding 
assertion for $\cS\mono\uNS$ is similar.
\end{proof}

\begin{defn}
We denote the right adjoint of $\pS\mono\iNS$ (resp. $\cS\mono\uNS$) in the 
above Proposition \ref{rAdjoint} by $\Com_*:\iNS\functor\pS$ (resp. 
$\Com:\uNS\functor\cS$) and call it the {\em underlying pointed space} (resp. 
{\em underlying space}) functor. Since $\Com_*$ and $\Com$ admit fully faithful 
left adjoints they are colocalizations, i.e., they constitute the commutative 
(pointed) space approximation of a noncommutative (pointed) space. 
\end{defn}

Now we are going to demonstrate how noncommutative dendrices interconnect 
simplices and matrices. Let $T_n$ denote the linear graph 
$$\bullet_0\overset{e_1}{\leftarrow}\bullet_1\overset{e_2}{\leftarrow}\cdots 
\overset{e_{n}}{\leftarrow}\bullet_{n},$$ whose graph algebra $C^*(T_n)$ is 
isomorphic to $M_{n+1}(\CC)$ (the construction of the graph algebra is explained 
below in subsection \ref{graphRes}). Let $D^\ab (T_n)$ denote the commutative 
unital $C^*$-algebra generated by requiring the generators 
$\{q_{e_1},\cdots,q_{e_n}\}$ of $D(T_n)$ to commute (see Remark \ref{abel}). 
There is a canonical surjective $*$-homomorphism $\pi_n:D(T_n)\map D^\ab (T_n)$ 
that is identity on the generators. It follows from Proposition 2.1 of 
\cite{CunSimp} that $D^\ab (T_n)$ is isomorphic to the commutative 
$C^*$-algebra $\C(\Delta^n)$. There is also a canonical $*$-homomorphism $s_n: 
D(T_n)\map C^*(T_{n-1})\cong M_{n}(\CC)$, sending $q_{e_i}\mapsto e_{ii}$. Note that 
$\sum_{i=1}^n e_{ii}$ is the identity matrix that is the unit in the graph 
algebra $C^*(T_{n-1})\cong M_n(\CC)$. Thus we have a zigzag of arrows 
 \beq \label{SM}
 \xymatrix{
 && D(T_n)\ar[dll]_{\pi_n}\ar[drr]^{s_n} \\
 D^\ab (T_n)\cong\C(\Delta^n) &&&& C^*(T_{n-1})\cong M_n(\CC).
 }
 \eeq The set of $*$-homomorphisms $\{s_n \,|\, n\in\NN\}$ defines a set of maps 
$M$ in the $\infty$-category noncommutative spaces $\uNS$ via the functor 
$j:\NSf\map\uNS$. Thus we are going to invert the maps in $M$ to construct the 
simplex-matrix identified version of $\uNS$. It is quite natural to consider 
matrix algebras as noncommutative simplices.

\begin{defn}
 The accessible localization $L_M:\uNS\map M^{-1}\uNS=:\iNSm$ that admits a fully faithful 
 right adjoint is defined to be the $\infty$-category of {\em simplex-matrix identified noncommutative spaces}. 
\end{defn}

\begin{rem}
Since $\uNS$ is a presentable $\infty$-category, so is $\iNSm$.  
\end{rem}

\begin{rem} \label{tract}
The composite functor $\iNSm\mono\uNS\overset{\Com}{\functor}\cS$ defines the 
underlying space functor on $\iNSm$. The subcategory of simplex-matrix 
identified noncommutative spaces $\iNSm$ is a tractable part of the entire 
$\infty$-category of noncommutative spaces $\uNS$ and it would be nice to 
explore it further.
\end{rem}

\begin{rem} \label{ncgreal}
Let $\CW$ denote the category of finite CW complexes. The geometric realization functor 
$|\cdot |:\sset\map \Ind(\CW)$ preserves (tensor) products
and detects weak equivalences, whose counterpart in the world of dendroidal sets has been treated 
in \cite{Heuts,BasNik}. It is plausible (and desirable) that one can modify the 
functor $\draw:\dset\functor\P(\SCu^\op)$ to produce yet another {\em 
$C^*$-algebraic} or {\em noncommutative} geometric realization of dendroidal 
sets that fits into the following commutative diagram:

\beqn
\xymatrix{
\sset \ar[rrr]^{|\cdot|}\ar[d] &&& \Ind(\CW)\ar[d]^{}\\
\dset \ar[rrr]^{?} &&& \Ind(\SCu^\op)\subset\P(\SCu^\op).
}
\eeqn
We leave it as an open problem. 
\end{rem}

\subsection{Graph algebras} \label{graphRes}
There is a vast literature on graph algebras (or graph $C^*$-algebras) with 
several interesting results relating structural aspects of the graph algebra 
(like simplicity) to purely graph theoretic properties. We encourage the 
interested readers to consult, for instance, \cite{RaeBook} and the references 
therein.

Let $E$ be a finite {\em directed} graph and let $\cH$ be a fixed separable Hilbert space. A 
{\em Cuntz--Krieger $E$-family $\{S,P\}$ on $\cH$} (abbreviated as CK 
$E$-family) consists of a set $P=\{P_v\,|\, v\in E^0\}$ of mutually orthogonal 
projections on $\cH$ and a set $S=\{S_e\,|\, e\in E^1\}$ of partial isometries 
on $\cH$, such that \begin{enumerate}                                                                              
\item (CK1) $S_e^* S_e = P_{s(e)}$ for all $e\in E^1$; and 
\item (CK2) $P_v = 
\sum_{\{e\in E^1\,:\, r(e)=v\}} S_e S_e^*$ provided $\{e\in E^1\,:\, r(e)=v\}\neq\emptyset$.
\end{enumerate} The graph 
algebra of $E$, denoted by $C^*(E)$, is by definition the universal 
$C^*$-algebra generated by $\{S,P\}$ subject to relations (CK1) and (CK2). It is 
known that $C^*(E)$ is unital if and only if the set of vertices $E^0$ is finite 
(see Proposition 1.4 of \cite{KumPasRae}).
                                                                             
\begin{rem}
Some authors prefer to write the relations (CK1) and (CK2) differently, viz., 
the roles of $r$ and $s$ are interchanged. We have adopted the convention from 
\cite{RaeBook}. The advantage of this viewpoint is that juxtaposition of edges 
in a path corresponds to composition of partial isometries on the Hilbert space 
$\cH$.\end{rem}                                                                           

\begin{ex}
 The graph algebra corresponding to the graph 
 \begin{tikzcd}
\arrow[loop left]{l}{} \bullet \arrow[loop right]{r}{}
\end{tikzcd} is Cuntz algebra $\cO_2$.
\end{ex}

The left Quillen functor $\draw:\dset\functor\P(\SCu^\op)$ is obtained by the 
left Kan extension of $\Omega\overset{D}{\map}\SCu^\op\map \P(\SCu^\op)$ along 
$\Omega\functor\dset$. Explicitly it is given by the formula: $$[\draw (X)] (A) 
= \underset{f: D(T)\map A}{\colim} X(T),$$ where the colimit is taken over the 
comma category $(D\downarrow A)$. The Quillen adjunction descends to an 
adjunction of homotopy categories 
$$\adj{\mathbf{L}\draw}{\Ho(\dset)}{\Ho(\P(\SCu^\op))}{\mathbf{R}\den},$$ after 
taking the total derived functors of $\draw$ and $\den$ ($\mathbf{L}\draw$ and 
$\mathbf{R}\den$ respectively).

The composite $\mathbf{L}\draw\circ\mathbf{R}\den$ defines a comonad on 
$\Ho(\P(\SCu^\op))$. Viewing any separable unital $C^*$-algebra $A$ inside 
$\Ho(\P(\SCu^\op))$ via the Yoneda functor, we may consider the map given by the 
counit of the adunction $\mathbf{L}\draw\circ\mathbf{R}\den(A)\functor\Id(A)$. 
It is presumably not an isomorphism; nevertheless, one should consider its 
comonadic resolution. If $A$ is a graph algebra, this resolution can be viewed 
as a {\em resolution of the underlying graph by trees}. It would be nice to 
classify $C^*$-algebras up to this dendroidal invariant.

\begin{rem} \label{KirchbergAlg}
In the world of $C^*$-algebras a celebrated result of Kirchberg asserts that 
topological $\K$-theory acts as a complete invariant on the subcategory of so-called
{\em stable Kirchberg algebras} that satisfy UCT \cite{KirPhi}. It was 
shown in \cite{MyComparison,CorPhi} that for such $C^*$-algebras (in fact, for a larger subcategory 
of $C^*$-algebras) algebraic $\K$-theory is naturally isomorphic to 
topological $\K$-theory (see Theorem 2.4 and Remark 1 of \cite{MyComparison}). If the vision outlined in 
the introduction can be realised, viz., if one can show that algebraic $\K$-theory and $\KK$-theory 
can be recovered from diagram \eqref{vision}, then the above-mentioned construction would provide a 
{\em higher invariant} that has the potential to act as a complete invariant on a bigger subcategory than
that of stable Kirchberg algebras satisfying UCT. Observe that topological $\K$-theory is also the primary 
classification tool for graph algebras. It would be actually more prudent to analyse this construction 
for a graph algebra at the level of underlying $\infty$-categories (and not at 
the level of homotopy categories), possibly, after passing to the stabilization.
\end{rem}

\section{Appendix: The model structure on $\P(\SCu^\op)$} \label{App}
For any small category $\cC$ there is a {\em Cisinski model structure} on 
$\P(\cC)$ \cite{CisModel}, whose construction is described below. A {\em 
functorial cylinder object} is an endofunctor 
$I\otimes(-):\P(\cC)\functor\P(\cC)$, such that for every $X\in\P(\cC)$ there 
are natural morphisms $\partial^0_X,\partial^1_X,\sigma_X$ that satisfy:
\begin{enumerate}
 \item the following diagram commutes: \beqn
\xymatrix{
X\ar[rd]_{\partial^0_X}\ar@/^1pc/[rrrd]^{\id_X}\\
& I\otimes X \ar[rr]^{\sigma_X} && X ,\\
X \ar[ru]^{\partial^1_X}\ar@/_1pc/[rrru]_{\id_X}
}
\eeqn 

\item the canonical morphism $X\coprod X\map I\otimes X$ induced by 
$\partial^0_X,\partial^1_X$ is a monomorphism.
\end{enumerate} The choice of a functorial cylinder object 
$\cJ=(I\otimes(-),\partial^0_{(-)},\partial^1_{(-)},\sigma_{(-)})$ constitutes 
an {\em elementary homotopical datum} if $\cJ$ satisfies the following two 
additional conditions:

\begin{enumerate}[(i)]
 \item the functor $I\otimes(-)$ commutes with small colimits, and
 \item for every monomorphism $j:K\map L$ in $\P(\cC)$ for $e=0,1$ the diagram 
\beqn
 \xymatrix{
 K \ar[r]^j\ar[d]_{\partial^e_K} & L\ar[d]^{\partial^e_L}\\
 I\otimes K \ar[r]^{I\otimes j} & I\otimes L
 }
 \eeqn is a pullback square.
\end{enumerate} Using the functorial cylinder object $\cJ$ on can define an {\em 
elementary $\cJ$-homotopy} between two maps in $\P(\cC)$, viz., two maps $f,g: 
X\map Y$ are elementary $\cJ$-homotopic if there is a map $\eta: I\otimes X \map 
Y$ making the following diagram commute:

\beqn
\xymatrix{
X\ar[rd]^f\ar[d]_{\partial^0_X}\\
I\otimes X \ar[r]^\eta & Y\\
X .\ar[u]^{\partial^1_X}\ar[ru]_g}
\eeqn Let $\Ho_{\cJ}\P(\cC)$ denote the category whose objects are those of 
$\P(\cC)$ and whose morphisms are the elementary $\cJ$-homotopy classes of 
morphisms of $\P(\cC)$. 

\begin{defn} \label{Jhtpy}
There is a canonical functor $\P(\cC)\functor\Ho_{\cJ}\P(\cC)$ and the 
morphisms that descend to isomorphisms under this functor are called {\em 
$\cJ$-homotopy equivalences}. This notion obviously depends on the choice of $\cJ$. \end{defn} 
 
\noindent
 The model structure on $\P(\cC)$ depends on another choice, viz., a class $\An$ 
of {\em anodyne extensions}. For a class $M$ of maps of $\P(\cC)$ we denote by 
$\llp(M)$ [resp. $\rlp(M)$] the class of maps that satisfy left [resp. right] 
lifting property with respect to $M$. For any cartesian square \beqn
\xymatrix{
X\ar[r]\ar[d] & Y\ar[d]\\
Z \ar[r] & W}\eeqn in $\P(\cC)$ with $Y\map W$ and $Z\map W$ monomorphisms, the 
canonical map $Y\coprod_X Z\map W$ is also a monomorphism. For brevity this 
monomorphism is suggestively written as $Y\cup Z \map W$. 

\begin{defn}
 Let $\cJ$ be an elementary homotopy datum on $\P(\cC)$. Then a {\em class of 
anodyne extensions $\An$ relative to $\cJ$} is a class of morphisms in $\P(\cC)$, 
such that 
 \begin{enumerate}[(a)]
  \item $\An = \llp(\rlp(M))$ for a small set of maps $M$,
  \item for any monomorphism $K\map L$ and for $e=0,1$ the induced map $I\otimes 
K \cup \{e\}\otimes L \map I\otimes L$ belongs to $\An$, and
  \item if $K\map L$ belongs to $\An$, then so does $I\otimes K\cup \partial 
I\otimes L \map I\otimes L$, where $\partial I \otimes L = L\coprod L$.
 \end{enumerate}
\end{defn} 

\begin{rem} \label{AnEx}
It is shown in Proposition 1.3.13 of \cite{CisModel} that for any small set $S$ 
of monomorphisms of $\P(\cC)$ there is a smallest class of anodyne extensions 
relative to $\cJ$ that is generated by $S$. This class of morphisms is denoted 
by $\An_{\cJ}(S)$.
\end{rem}

\begin{thm}[Th{\'e}or{\`e}me 1.3.22 of \cite{CisModel}] \label{CisMS}
Let $\cJ$ be an elementary homotopy datum on $\P(\cC)$ and $\An_{\cJ}(S)$ be a 
class of anodyne extensions relative to $\cJ$ that is generated by a small set 
$S$ of monomorphisms. Then there is a combinatorial model structure on $\P(\cC)$ 
satisfying
\begin{enumerate}
 \item the cofibrations are the monomorphisms,
 \item $X\in\P(\cC)$ is fibrant if the map $X\map \star$, where $\star$ is the terminal 
object, satisfies right lifting property with respect to all anodyne extensions 
$\An_{\cJ}(S)$, and 
 \item a map $f:X\map Y$ is a weak equivalence if for all fibrant objects $Z$ 
the induced map $f^*:\Ho_{\cJ}\P(\cC)(Y,Z)\map \Ho_{\cJ}\P(\cC)(X,Z)$ is 
bijective.
\end{enumerate}
\end{thm}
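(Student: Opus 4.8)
The plan is to exhibit the three prescribed classes as the cofibrations, fibrations, and weak equivalences of a cofibrantly generated model structure, supplying the required lifts and factorizations from the small object argument and from the homotopy theory carried by $\cJ$. First I would install the two weak factorization systems. Since $\P(\cC)$ is a presheaf topos, its monomorphisms are generated, as a saturated class, by the (small) set of boundary inclusions of representables, so the small object argument yields a cofibrantly generated weak factorization system $(\text{monos}, \rlp(\text{monos}))$; I call the right class the \emph{trivial fibrations}. Condition (a) in the definition of anodyne extensions says $\An_\cJ(S) = \llp(\rlp(M))$ for a small set $M$, so the small object argument also produces $(\An_\cJ(S), \rlp(\An_\cJ(S)))$; I take the right class to be the \emph{naive fibrations} and declare $X$ fibrant when $X \map \ast$ is a naive fibration, matching (2). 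Because $S$ and the maps forced into $\An_\cJ(S)$ by conditions (b)--(c) are all monomorphisms, and monomorphisms form the left class of a weak factorization system, the saturation gives $\An_\cJ(S) \subseteq \text{monos}$; in particular every trivial fibration is a naive fibration.

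Next I would set up the homotopy calculus. Using the datum conditions (i)--(ii) on the cylinder $I \otimes (-)$ together with the relative anodyne conditions (b)--(c), one checks that elementary $\cJ$-homotopy is an equivalence relation on $\Hom(X, Z)$ as soon as $Z$ is fibrant, so that $[X, Z] := \Ho_\cJ\P(\cC)(X, Z)$ genuinely computes homotopy classes into fibrant targets and composition descends. The weak equivalences $\cW$ of (3) are the maps $f$ for which $f^* \colon [Y, Z] \map [X, Z]$ is bijective for every fibrant $Z$; the $2$-out-of-$3$ property and closure under retracts are then immediate, a composite or retract of bijections being a bijection.

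The core of the proof is to match $\cW$ against the two factorization systems. I would first record the two easy inclusions. Every anodyne extension $j \colon K \map L$ lies in $\cW$: for fibrant $Z$, surjectivity of $j^*$ is the strict lifting of any $K \map Z$ through $j$ afforded by fibrancy of $Z$, while injectivity comes from condition (c) applied to $I \otimes K \cup \partial I \otimes L \map I \otimes L$, which is anodyne and hence admits a filler extending any homotopy-plus-endpoints datum into $Z$. Every trivial fibration lies in $\cW$ as well, by lifting against monomorphisms to build a section and a $\cJ$-homotopy exhibiting it as a homotopy inverse. The \textbf{keystone}, and the step I expect to be the main obstacle, is the converse: \emph{a naive fibration that is a weak equivalence is a trivial fibration}, i.e. has the right lifting property against every monomorphism. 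The difficulty is that one cannot produce the required strict lift directly; one must first solve the lifting problem up to $\cJ$-homotopy, using that the map is a weak equivalence, and then rectify this homotopical solution into an on-the-nose lift via the relative homotopy extension property encoded in conditions (b)--(c). This is precisely Cisinski's ``clef de vo\^ute'' and absorbs essentially all the technical content.

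With the keystone established, the remaining axioms follow formally. Factoring through the $\An_\cJ(S)$-system and combining $2$-out-of-$3$ with the keystone identifies the naive fibrations in $\cW$ with the trivial fibrations, and a standard retract argument identifies $\llp(\text{naive fibrations})$ with $\text{monos} \cap \cW$: given $j \in \text{monos} \cap \cW$, factor $j$ as an anodyne extension followed by a naive fibration, conclude the fibration part is a trivial fibration by the keystone, and exhibit $j$ as a retract of the anodyne extension. The two factorizations are already supplied by the small object argument, so all model category axioms hold. Finally, combinatoriality is automatic: $\P(\cC)$ is locally presentable and both weak factorization systems are generated by small sets.
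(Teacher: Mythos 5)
You should first be aware that the paper contains no proof of this statement at all: it is imported verbatim as Th\'eor\`eme 1.3.22 of \cite{CisModel}, so your attempt can only be measured against Cisinski's own argument. Your outline has the right overall shape --- two cofibrantly generated weak factorization systems, weak equivalences detected by $\cJ$-homotopy classes into fibrant targets, the two easy inclusions, and a pivotal lemma about naive fibrations --- but the pivotal lemma is misstated in a way that breaks the endgame. As you formulate it, the keystone (``a naive fibration that is a weak equivalence is a trivial fibration'') carries no fibrancy hypothesis, and your closing retract argument then shows that every monomorphism in $\cW$ is a retract of an anodyne extension, i.e.\ that $\mathrm{monos}\cap\cW=\An_{\cJ}(S)$ and hence that the fibrations of the resulting model structure are exactly the naive fibrations. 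That conclusion is false in general, which is precisely why the theorem characterizes only the fibrant \emph{objects} (not the fibrations) by lifting against $\An_{\cJ}(S)$: for typical choices of $S$ the inclusion $\An_{\cJ}(S)\subseteq\mathrm{monos}\cap\cW$ is strict. The Joyal model structure, built by this very machinery from the inner horn inclusions, is the standard cautionary example --- the anodyne class does not exhaust the trivial cofibrations, and a naive fibration with non-fibrant codomain need not be a fibration. The correct keystone in Cisinski's proof requires the codomain (hence the domain) of the naive fibration to be fibrant, and once that restriction is in place your retract argument no longer delivers the lifting and factorization axioms for arbitrary maps.

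Because of this, the final assembly has to be organized differently: Cisinski obtains the model structure from J. Smith's recognition theorem for combinatorial model categories, verifying that $\cW$ is an accessible, accessibly embedded class satisfying two-out-of-three, that $\rlp(\mathrm{monos})\subseteq\cW$, and that $\mathrm{monos}\cap\cW$ is stable under pushout and transfinite composition; the fibrant-codomain keystone, fed through the fibrant replacement supplied by the anodyne factorization, is what powers these verifications. Independently of this, your proposal leaves the keystone itself entirely unproved --- you correctly identify it as absorbing all the technical content, but naming the ``solve up to homotopy, then rectify'' strategy is not carrying it out, so even with the fibrancy hypothesis restored the submission is a roadmap rather than a proof. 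A smaller inaccuracy: in a general presheaf category $\P(\cC)$ the monomorphisms are generated by the set of monomorphisms whose codomains are quotients of representables (as the paper itself records from Proposition 1.2.27 of \cite{CisModel}), not by ``boundary inclusions of representables''; the latter suffices for $\Delta$ but already fails for $\Omega$, where they generate only the normal monomorphisms.
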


\begin{rem}
 The Cisinksi model structure on $\P(\cC)$ admits a functorial fibrant 
replacement. A set of generating cofibrations can be chosen to be those 
monomorphisms whose codomains are quotients of representable presheaves (see 
Proposition 1.2.27 of \cite{CisModel}). Every object of $\P(\cC)$ is cofibrant 
and its homotopy category is equivalent to the full subcategory of 
$\Ho_{\cJ}\P(\cC)$ spanned by the fibrant objects (see 1.3.23 of 
\cite{CisModel}). Moreover, a morphism between two fibrant objects is a weak 
equivalence if and only if it is a $\cJ$-homotopy equivalence.
\end{rem}

\begin{prop} \label{GenCof}
 The functor $\draw:\dset\functor\P(\SCu^\op)$ preseves cofibrations.
\end{prop}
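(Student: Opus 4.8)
The plan is to reduce to the generating cofibrations and then settle the one remaining statement by hand, exploiting that face maps of trees become \emph{surjective} $*$-homomorphisms under $D$.

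First I would use that $\draw$ is a left adjoint (it is left adjoint to $\den$, see Remark \ref{colimits}), hence preserves all colimits, together with the fact that in the target the cofibrations are exactly the monomorphisms (Theorem \ref{CisMS}(1), in the form of Theorem \ref{DenModel}). The monomorphisms of the presheaf topos $\P(\SCu^\op)$ form a saturated class, being stable under pushout, transfinite composition and retract, while the normal monomorphisms of $\dset$ are the saturation of the generating set $I=\{\partial\Omega[T]\map\Omega[T]\}$. Since a functor preserving colimits and retracts preserves such saturations, it suffices to show that $\draw(\partial\Omega[T]\map\Omega[T])$ is a monomorphism for each $T\in\Omega$.

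Second, I would identify the two objects concretely. As $\draw$ is the left Kan extension of the composite $\Omega\overset{D}{\map}\SCu^\op\mono\P(\SCu^\op)$ along the Yoneda embedding, it carries representables to representables: $\draw(\Omega[T])$ is the presheaf $A\mapsto\Hom_{\SCu}(D(T),A)$. Writing $\partial\Omega[T]$ as the colimit over its (nondegenerate) elements and applying $\draw$ gives $\draw(\partial\Omega[T])=\colim y(D(T_\alpha))$, indexed by the proper iterated faces $\delta_\alpha\colon T_\alpha\mono T$, where $y$ denotes the Yoneda embedding. The comparison map is induced by the surjective $*$-homomorphisms $\delta_\alpha^*\colon D(T)\epi D(T_\alpha)$; because a surjection is an epimorphism in $\SCu$, precomposition with $\delta_\alpha^*$ is injective, so each map $y(D(T_\alpha))\map y(D(T))$ is already a monomorphism. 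The content is therefore that these subobjects glue to a \emph{union}.

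Third (the crux), I would verify the monomorphism objectwise via the coend formula $[\draw(X)](A)=\int^{S\in\Omega}X(S)\times\Hom_{\SCu}(D(S),A)$. After absorbing degeneracies and isomorphisms (Lemma \ref{treeMap}), every class in $[\draw(\partial\Omega[T])](A)$ has a representative $(T_\alpha,\delta_\alpha,\phi)$ with $T_\alpha$ a proper face and $\phi\colon D(T_\alpha)\map A$, mapping to $\phi\circ\delta_\alpha^*$. Suppose $(T_\alpha,\delta_\alpha,\phi)$ and $(T_\beta,\delta_\beta,\psi)$ have equal image $\theta=\phi\delta_\alpha^*=\psi\delta_\beta^*\colon D(T)\map A$. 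Then $\theta$ annihilates the closed ideals generated respectively by the edges deleted by $\alpha$ and by $\beta$, hence their sum. The dendroidal identities (II)--(IV) of subsection \ref{iden} guarantee that $T_\alpha$ and $T_\beta$ share a common iterated face $\delta_\gamma\colon T_\gamma\mono T$ deleting exactly the union of the two edge-sets, and the explicit description of $D$ identifies $D(T_\gamma)$ with the quotient of $D(T)$ by that sum of ideals. Thus $\theta=\rho\circ\delta_\gamma^*$, and surjectivity of $\delta_\alpha^*,\delta_\beta^*$ forces $\phi=\rho\circ\delta_{\alpha\gamma}^*$ and $\psi=\rho\circ\delta_{\beta\gamma}^*$ for the face maps $\delta_{\alpha\gamma}\colon T_\gamma\mono T_\alpha$, $\delta_{\beta\gamma}\colon T_\gamma\mono T_\beta$. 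Applying the coend relation to these two face maps rewrites both triples as $(T_\gamma,\delta_\gamma,\rho)$, so they already coincide in the colimit, giving injectivity.

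I expect the main obstacle to be precisely this third step: controlling the colimit defining $\draw(\partial\Omega[T])$ finely enough to read off injectivity. The heart of the matter is that ``intersections of faces are faces'' (the identities of subsection \ref{iden}) must be matched on the $C^*$-side by the assertion that $D$ turns each intersection-of-faces square into a pushout square of $C^*$-algebras, equivalently that the common refinement $D(T_\gamma)$ is the quotient of $D(T)$ by the sum of the two ideals of deleted edges. Everything else --- the reduction to generating cofibrations and the per-cell monomorphy --- is formal.
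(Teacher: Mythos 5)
Your proposal follows the same route as the paper's proof: reduce to the generating cofibrations $\partial\Omega[T]\to\Omega[T]$, use that face maps induce surjective $*$-homomorphisms $D(T)\twoheadrightarrow D(T')$ and hence monomorphisms of representable presheaves, and conclude by the saturation argument via Lemma 2.1.20 of \cite{Hov}. Your third step spells out what the paper compresses into the single phrase ``it follows from the universal property of the noncommutative dendrices construction''; the only caveat is that the common face $T_\gamma$ deleting $E_\alpha\cup E_\beta$ need not exist (e.g.\ for two distinct edges of a corolla, or in the situation of identity (IV) when the relevant outer face is absent), but in exactly those cases the relation $\sum_e q_e=1$ forces the unit of $A$ to be $0$, so the hypothesis $\phi\delta_\alpha^*=\psi\delta_\beta^*$ is vacuous and injectivity still holds.
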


\begin{proof}
The set of generating cofibrations in $\dset$ is $\{\partial \Omega[T]\map 
\Omega[T]\,|\, T\in\Omega \}$. Each face map $\partial: T'\map T$ of trees 
induces a monomorphism of representable presheaves, whose image is specified by 
the datum of this monomorphism of representable presheaves (see Chapter IV of 
\cite{MacMoe}). For any tree $T$ the boundary inclusion $\partial \Omega[T] 
\map\Omega[T]$ is obtained as a union of the images of such face maps. We know 
that $\draw$ sends the representable presheaf of $T$ to that of $D(T)$. Each 
face map $\partial:T'\map T$ in $\Omega$ induces a surjective $*$-homomorphism 
$\partial^*:D(T)\map D(T')$ in $\SCu$ (see subsection \ref{func}). It induces a 
monomorphism in $\SCu^\op$ and the Yoneda embedding preserves monomorphisms 
whence $\draw(\partial):\SCu^\op(-,D(T'))\map \SCu^\op(-,D(T))$ is a 
monomorphism in $\P(\SCu^\op)$. It follows from the universal property of the 
noncommutative dendrices construction that $\draw$ sends the generating 
cofibrations of $\dset$ to monomorphisms of $\P(\SCu^\op)$. Note that the 
cofibrations of $\P(\SCu^\op)$ are precisely the monomorphisms whence Lemma 
2.1.20 of \cite{Hov} shows that $\draw$ preserves cofibrations.
\end{proof}

\begin{rem}
 It is clear that the above Proposition does not depend on the choice of $\cJ$.
\end{rem}

\noindent
For the choice of the elementary homotopy datum we have a few possibilities at 
our disposal. 

\begin{ex}[Example 1.3.9 of \cite{CisModel}] \label{Lawvere}
Let $\cC$ be any small category. For an object $C\in\cC$ let us denote the 
representable presheaf of $C$ in $\P(\cC)$ by $h_C$. Let $\cL$ denote the 
presheaf that associates with every $C\in\cC$ the set $\cL(C)=\{\text{subobjects 
of $h_C$}\}$. For every map $u:C\map D$ in $\cC$ the map $\cL(D)\map\cL(C)$ is 
induced by pullback along $u$. The presheaf $\cL$ turns out to be a subobject 
classifier, i.e., $\P(\cC)(X,\cL)\simeq\{\text{subobjects of the presheaf 
$X$}\}$. If $\star$ is the final object of $\P(\cC)$, then it has exactly two 
subobjects $\star\hookrightarrow \star$ and $\emptyset\hookrightarrow \star$, 
where $\emptyset$ denotes the initial object of $\P(\cC)$. These define two 
morphisms $\lambda_0,\lambda_1: \star\map \cL$. The tuple 
$(\cL,\lambda_0,\lambda_1)$ gives rise to an elementary homotopy datum by 
setting $I\otimes X = \cL\times X$, $\partial^e_X = \lambda_e\times\id_X$, 
$e=0,1$, and $\sigma_X = \mathrm{pr}_2:\cL\times X\map X$. This elementary 
homotopy datum is called the {\em Lawvere cylinder} that exists in any category 
of presheaves like $\P(\SCu^\op)$.
\end{ex}

\begin{ex}For any {\em nonzero} separable unital $C^*$-algebra $A$ there is a 
sequence of two $*$-homomorphisms $A\overset{\iota}{\map} 
A[0,1]:=\C([0,1],A)\overset{\ev_t}{\map} A$ for any $t\in [0,1]$ (natural in $A$), whose composition 
is the identity $*$-homomorphism on $A$. Here $\iota(a)$ is the constant 
$a$-valued function on $[0,1]$ for every $a\in A$ and $\ev_t$ is the evaluation 
at $t\in [0,1]$. For $A=\CC$ after reversing the arrows and passing to the 
representable presheaves in $\P(\SCu^\op)$ we get the following square \beq 
\label{cyl}
\xymatrix{
\emptyset \ar[rr] \ar[d] && h_\CC \ar[d]^{\partial^1=\ev_1^*}\\
h_\CC \ar[rr]^{\partial^0=\ev_0^*} && h_{\C([0,1]),}
}
\eeq where $\emptyset$ is the initial object (empty presheaf) of $\P(\SCu^\op)$. Note 
that $\P(\SCu^\op)$ are $\Set$-valued covariant functors on $\SCu$ and we do not notationally 
distinguish between objects in a category and in its opposite.
For every $A\in\SCu^\op$ we find that the following diagram

\beqn
\xymatrix{
\emptyset \ar[r] \ar[d] & h_\CC(A) \ar[d]^{\ev_1^*}\\
h_\CC (A) \ar[r]^{\ev_0^*} & h_{\C([0,1])}(A)
}
\eeqn is a pullback square in $\Set$. Indeed, $ h_\CC(A) =\SCu^\op(A,\CC) 
=\{\one_A\}$, where $\one_A$ is the unique unital $*$-homomorphism $\CC\map A$, 
and $(\one_A \circ\ev_t^*) (f) = f(t)\one_A$ for $t=0,1$ and for every 
$f\in\CC[0,1]=\C([0,1],\CC)$. In this argument it is crucial that $A$ is a {\em nonzero} 
separable unital $C^*$-algebra. Since limits are 
computed objectwise in $\P(\SCu^\op)$ we conclude that diagram \eqref{cyl} is a 
pullback square. It follows from Example 1.3.8 of \cite{CisModel} that 
$$\cJ=(I\times X,\partial^0\times \id_X,\partial^1\times\id_X,\mathrm{pr}_X: 
I\times X\map X)$$ defines an elementary homotopy datum. 
\end{ex}

\begin{ex}[Continuous cylinder] \label{ContCyl}
Consider again the sequence of $*$-homomorphisms $A\overset{\iota}{\map} 
A[0,1]\overset{\ev_t}{\map} A$ (natural in $A$), whose composition is the 
identity $*$-homomorphism on $A$. Given any representable object $h_A$ we set 
$I\otimes h_A = h_{A[0,1]}$ and extend the cylinder construction to all objects 
of $\P(\SCu^\op)$ by commuting with colimits, i.e., if $X\cong\colim_i\, 
h_{A_i}$, then we set $I\otimes X \cong \colim_i\, h_{A_i[0,1]}$.
\end{ex}

We choose the elementary homotopy datum of Example \ref{Lawvere} since it is the 
most canonical choice for the Cisinski model structure on any presheaf category. Subsequently
we are going to localize our model structure based on our requirements.
Let $X$ be a set of generating trivial cofibrations of $\dset$ and set $S=\draw(X)$. By the 
above Proposition \ref{GenCof} $S$ is a set of monomorphisms of $\P(\SCu^\op)$ 
that generates a class of anodyne extensions $\An_{\cJ}(S)$ relative to 
$\cJ$ (see Remark \ref{AnEx}). As a consequence of Theorem \ref{CisMS} we obtain

\begin{thm}[Operadic model structure] \label{DenModel}
 With the choice of the elementary homotopy datum $\cJ$ of Example \ref{Lawvere} 
and the class of anodyne extensions $\An_{\cJ}(S)$ relative to $\cJ$ described 
above $\P(\SCu^\op)$ acquires the structure of a combinatorial model category.
\end{thm}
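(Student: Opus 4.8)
The plan is to verify that the data assembled in the paragraph preceding the statement satisfy the hypotheses of Cisinski's existence theorem (Theorem \ref{CisMS}) and then invoke that theorem verbatim. Cisinski's machine requires exactly two inputs on a presheaf category $\P(\cC)$: an elementary homotopy datum $\cJ$, and a class of anodyne extensions relative to $\cJ$ generated by a \emph{small} set of monomorphisms. Once both are in hand, the conclusion — a combinatorial model structure whose cofibrations are precisely the monomorphisms — is automatic, so the whole argument is really the packaging of earlier results into the shape demanded by Theorem \ref{CisMS}.

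First I would record that $\SCu^\op$ is essentially small (there is only a set of isomorphism classes of separable $C^*$-algebras), so that after passing to a skeleton $\P(\SCu^\op)$ is genuinely a category of presheaves on a small category and Cisinski's theory applies. The elementary homotopy datum is then the Lawvere cylinder of Example \ref{Lawvere}, which exists in any category of presheaves; thus $\cJ$ is supplied with no further work.

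Next I would check that $S=\draw(X)$ meets the two demands on the generating set. Smallness is immediate: $\dset$ is a combinatorial model category (subsection \ref{denModel}), so its generating trivial cofibrations form a small set $X$, and the image of a small set under the functor $\draw$ is again a small set. That every element of $S$ is a monomorphism is exactly Proposition \ref{GenCof}, since each generating trivial cofibration of $\dset$ is in particular a cofibration and the cofibrations of a Cisinski structure on $\P(\SCu^\op)$ are the monomorphisms. With $S$ established as a small set of monomorphisms, Remark \ref{AnEx} (Proposition 1.3.13 of \cite{CisModel}) produces the smallest class of anodyne extensions $\An_{\cJ}(S)$ relative to $\cJ$ generated by $S$.

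Having furnished both inputs, I would apply Theorem \ref{CisMS} to obtain the combinatorial model structure on $\P(\SCu^\op)$, with the monomorphisms as cofibrations, fibrant objects detected by right lifting against $\An_{\cJ}(S)$, and weak equivalences detected via bijections of $\Ho_{\cJ}\P(\SCu^\op)$-hom sets into fibrant objects. There is essentially no obstacle to existence itself; the genuine content was front-loaded into Proposition \ref{GenCof} (that $\draw$ sends cofibrations to monomorphisms) and into exhibiting the Lawvere cylinder. The one point that demands care is the set-theoretic one — that $S$ is a legitimate small set and that the base category is essentially small — since Cisinski's smallness-driven generation of $\An_{\cJ}(S)$ is what the whole construction rests on.
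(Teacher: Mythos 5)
Your proposal is correct and follows essentially the same route as the paper: choose the Lawvere cylinder as the elementary homotopy datum, use Proposition \ref{GenCof} to see that $S=\draw(X)$ is a small set of monomorphisms generating $\An_{\cJ}(S)$ via Remark \ref{AnEx}, and then apply Theorem \ref{CisMS}. Your added attention to the essential smallness of $\SCu^\op$ is a reasonable (if routine) point that the paper leaves implicit.
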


\begin{rem} \label{OperCont}
 Note that the Lawvere cylinder is different from the continuous cylinder of Example 
\ref{ContCyl}. Hence the evaluation map $A[0,1]\overset{\ev_t}{\map} A$ is not a weak 
equivalence in the operadic model structure; it roughly mirrors the Joyal model structure on the 
category of simplicial sets, in which $\Delta^1\map\Delta^0$ is not a weak equivalence.
\end{rem}

\begin{rem} \label{TrivCof}
It is shown in Lemma 1.3.31 of \cite{CisModel} 
that every anodyne extension is a 
weak equivalence. Since $\draw(X)=S\subset \An_{\cJ}(S)$, where $X$ is the set 
of generating trivial cofibrations of $\dset$, we observe that by construction 
the functor $\draw$ sends generating trivial cofibrations of $\dset$ to trivial 
cofibrations of $\P(\SCu^\op)$.
\end{rem}

\begin{rem} \label{GenCisModel}
The construction of the Cisinski model structure can be profitably used in other 
contexts. For instance, one can start with a small category $\cA$ of topological 
algebras (Banach, Fr{\'e}chet, or locally convex) with some mild hypotheses. 
Then one can simply start with the minimal model structure on $\P(\cA^\op)$ by 
choosing the Lawvere cylinder (see Example \ref{Lawvere}) for the elementary 
homotopy datum $\cJ$  and $\An_\cJ(\emptyset)$ for the class of anodyne 
extensions. Now one can localize this combinatorial model category by inverting 
a small set of morphisms like differentiable homotopy equivalences between the 
representable objects in $\P(\cA^\op)$. This would produce an unstable model 
category to start with that can be ($\infty$-categorically) stabilized and 
localized further according to one's requirements; for instance, one can aim for 
a stable $\infty$-category, whose morphism groups model the Cuntz $\kk$-groups 
for locally convex algebras \cite{CunBivSurvey}. {\O}stv{\ae}r developed his homotopy 
theory of $C^*$-algebras adopting a similar strategy in the setting of cubical set valued 
presheaves on the category of separable $C^*$-algebras \cite{Ost} but we do not 
expect a Quillen equivalence between his unstable model category for {\em 
cubical $C^*$-spaces} and $\P(\SCu^\op)$ equipped with the operadic model 
structure as in Theorem \ref{DenModel}. This is because the evaluation map $A[0,1]\overset{\ev_t}{\map} A$ of 
the continuous cylinder construction (see Example \ref{ContCyl}) is not a weak equivalence in 
the operadic model structure. One final observation - all the ingredients needed
to develop a Waldhausen $\K$-theory of noncommutative spaces are now at our 
disposal.
\end{rem}

%----------------------------------------bibliography---------------------------

\bibliographystyle{abbrv}

\bibliography{/home/ibatu/Professional/math/MasterBib/bibliography}

\vspace{5mm}

\end{document}